\documentclass{article}
\usepackage{amsmath}
\usepackage{amsthm}
\usepackage{amsfonts}
\usepackage{amssymb}
\usepackage{amscd}
\usepackage[all]{xy}
\usepackage{graphicx}
\usepackage{xcolor}
\usepackage{enumerate}

\usepackage{comment}
\usepackage{hyperref}

\newtheorem{theorem}{Theorem}

\newtheorem{corollary}[theorem]{Corollary}

\newtheorem{lemma}[theorem]{Lemma}

\newtheorem{proposition}[theorem]{Proposition}

\newcommand{\steq}{\succ_{stab}}

\newcommand{\Cbb}{\mathbb C}
\newcommand{\Ocal}{\mathcal O}

\newcommand{\ee}{\quad\text{ and }\quad}

\usepackage[
    backend=biber,
    style=numeric,
	sorting=nyt,
    natbib=true,
    url=false, 
    doi=true,
    backref=true
    eprint=false
]{biblatex}
\begin{document}

\title{Characterizing $(d,h)$-elliptic stable irreducible curves}
\author{Juliana Coelho\\{\scriptsize julianacoelhochaves@id.uff.br}
\and Renata Costa\\{\scriptsize renatac@id.uff.br}
}

\maketitle

\begin{abstract}
We use admissible covers to characterize irreducible stable curves that are
 $(d,h)$-elliptic, that is, that 
are limits of smooth curves admiting finite maps of degree-$d$ to smooth curves of genus $h\geq 1$. 
\end{abstract}

%\tableofcontents

\section{Introduction}

%In this paper we always work over the field of complex numbers $\mathbb{C}$.

A smooth curve $C$ is said to be $d$-gonal if it admits a degree-$d$ map to $\mathbb P^1$. 
The gonality, that is, the minimum $d$ such that $C$ is $d$-gonal, is an important numerical invariant in the study of algebraic curves. 
More generally, we say that $C$ is {$(d, h)$-elliptic} if it admits a degree-$d$ morphism to a smooth curve of genus $h$.
It follows from de Franchi's theorem that, if $d\geq 2$, then a general curve $C$ of genus $g\geq 2$ is not $(d,h)$-elliptic for any $h\geq 1$ (see \cite[Corollary XXI.8.32]{3autores}). 
It is thus an interesting problem to try to understand how such a smooth curve can degenerate, that is, what are the possible stable limits of $(d,h)$-elliptic smooth curves for $h\geq 1$.

In this paper, we deal with this issue using 
the theory of admissible covers, introduced in  \cite{HarrMum}, and  greatly generalized by many authors such as in \cite{abracortivistoli} and \cite{mochizuki}. 
This approach has already been considered 
for instance in \cite{faberpaganini}, where the authors determine the class of the so-called bielliptic locus for genus-3 stable curves, that is, the locus of $(2,1)$-elliptic curves in $\overline M_3$. 
The case $h=1$ 
is related to the classical study of elliptic integrals
and was also studied in \cite{ddelliptic2018}, albeit without the use of admissible covers, where the authors considered stable curves of genus 2 admiting two morphisms to a smooth curve of genus $1$.

%
%
%
%When considering smooth curves in families, one needs to consider singular curves as well, since smooth curves can degenerate to singular ones. 
%There are a few possible ways to extend this notion  to singular curves. 
%One way, related to the classical study of elliptic integrals, is to simply take the same definition as in the smooth case. This was done for instance 
%in \cite{kani97}
%and \cite{ddelliptic2018}. 
%The drawback here is that this definition does not work very well in families, since not all curves that are limits of smooth $(d,h)$-elliptic ones have maps of degree $d$ to some curve of genus $h$. 
%
%In this paper, we deal with this issue using 
%the theory of admissible covers, introduced in  \cite{HarrMum}, and  greatly generalized by many authors such as in \cite{abracortivistoli} and \cite{mochizuki}. 
%This approach has already been considered 
%for instance in \cite{faberpaganini}, where the authors study the so-called bielliptic locus for genus-3 stable curves. 

\subsection{Main Results}

In this paper we work over $\mathbb{C}$.
As we mentioned, we consider a stable curve $C$ to be $(d,h)$-elliptic if it is a limit of smooth ones or, equivalently, if there exists a $d$-sheeted admissible cover from a curve stably equivalent to $C$ to a curve of genus $h$. 
Rougly speaking, an admissible cover is a finite map of nodal curves satisfying some conditions, one of which is that the target is a pointed stable curve, when considered with the smooth branch points of the map. 

%On Section \ref{sec:back} we review some notation and terminology.

On Section \ref{sec:admissible} we define pseudo-admissible covers by switching this condition to the condition that every node of the target curve lies in the intersection of two components. 
On Proposition \ref{prop:construction} we present a gluing for pseudo-admissible covers in the style of those introduced in \cite{paper1}, and 
on Proposition \ref{prop:equivpseudo} we show that it is possible to characterize $(d,h)$-ellipticity using pseudo-admissible covers.

On Section \ref{sec:irred} we focus on irreducible curves. 
In Theorem \ref{thm:main} we characterize irreducible stable curves  that are $(d,h)$-elliptic for $h\geq 1$, in terms of its normalization. 
This result is similar to those of \cite{paper2}, where the case  $h=0$ was addressed for stable curves with two components.
As a consequence, we examine in Corollaries \ref{cor:cor1} and \ref{cor:cor2} the case of irreducible stable curves with only one node.

\section{Technical background}\label{sec:back}

In this paper we always work over the field of complex numbers $\mathbb{C}$.

%%%%%%%%%%%%%%%%%%%%%%%%%%%%%%%%%%%%%%%%%%%%%%%%%%%%%%%%%%%%%%%%%%%%%%%%%%%%
%\begin{comment}

A \textit{curve} $C$ is a connected, projective and reduced scheme of dimension $1$ over $\Cbb$. 
We denote by $C^{\text{sing}}$ the singular locus of $C$, and by $C^{\text{sm}}$ the smooth locus of $C$.
The \emph{genus} of $C$ is the arithmetic genus $g(C):=h^1(C,\Ocal_C)$.
A \textit{subcurve} $Y $ of $C$ is a reduced subscheme of pure dimension $1$, or equivalently, a reduced union of irreducible components of $C$. 
If $Y\subseteq C$ is a subcurve, we say $Y^{c}:=\overline{C\smallsetminus Y}$ is the \emph{complement} of $Y$ in $C$.

A \textit{nodal curve} $C$ is a curve with at most ordinary double points, called \emph{nodes}. 
A node is said to be \emph{separating} if there is a subcurve $Y$ of $C$ such that $Y\cap Y^c=\{n\}$. 
%The subcurves $Y$ and $Y^c$ are then said to be \emph{tails} of $C$ associated to the separating node $n$. A \emph{rational tail} is a tail of genus 0.
Let $C$ be a nodal curve and $\nu:{C}_\nu \rightarrow C$ be its normalization. Let 
$n$ be a node of $C$. The points $n_1, n_2 \in {C}_\nu$ such that $\nu(n_1)=\nu(n_2)=n$ are said  to be \textit{branches} of $n$.

A \emph{rational chain}  is a nodal curve of genus 0. 
We remark that a curve  is a rational chain if and only if its nodes are all separating and its components are all rational.
A \textit{cycle} is a  nodal curve $E$ whose dual graph is a cycle, that is, up to reordering, the components $E_1, \ldots, E_s$ of $E$ are such that $E_i$ meets $E_{i+1}$ at a single point, for $i=1,\ldots, s-1$, and $E_s$ meets $E_1$ at a single point. We  denote the cycle $E$ as the ordered $s$-uple $(E_1,\ldots,E_s)$. 
A \textit{rational cycle} is a cycle where all components are rational. Note that a rational cycle is a curve of genus 1.
%\begin{figure}[h!]
%    \centering
%    \includegraphics[width=0.35\linewidth]{Modelo TD PGMAT-UFF/rationalcycle.jpg}
%    \caption{Rational cycle with $s$ components.}
%    \label{fig:rationalcycle}
%\end{figure}

Let  $g$ and $n$ be non negative integers such that $2g-2+n>0$. A \textit{$n$-pointed stable curve of genus $g$} is a curve $C$ of genus $g$ together with $n$ distinct \emph{marked points} $p_1,\ldots,p_n\in C$  such that for every  smooth rational component $E$ of $C$,  
 the number of points in the intersection   $E\cap E^{c}$ plus the number of indices $i$ such that  $p_i$ lies on $E$ is at least three. 
A \textit{stable curve} is a 0-pointed stable curve.

Let $\pi: C \rightarrow B$ be a finite map between curves and let $p \in C$ and $q \in B$ be  smooth points such that $\pi(p)=q$. Let $x$ be a local parameter of $C$ around $p$ and $t$ be a local parameter of $B$ around $q$. Then, locally around $p$, the map $\pi$ is given by $t=x^e$, where $e$ is the \textit{ramification index} of $\pi$ at $p$, denoted by $e_{\pi}(p)$. Note that $1\leq e_{\pi}(p)\leq d$, where $d$ is the degree of $\pi$. 
We say that $\pi$ is \textit{ramified} at $p$, or that $p$ is a \textit{ramification point} of $\pi$, if $e_{\pi}(p)\geq 2.$ 
Moreover, we say $\pi$ is \textit{totally ramified} at $p$ if $e_{\pi}(p)=d$.
Finally, we say that $q$ is a \emph{branch point} of $\pi$ if $e_\pi(p')\geq 2$ for some $p'\in\pi^{-1}(q)$.

\section{Admissible Covers}\label{sec:admissible}

A smooth curve $C$ 
%is \emph{$d$-gonal} if it admits  admits a degree-$d$ morphism to $\mathbb{P}^1$. More generally, a smooth curve $C$ 
is \emph{$(d, h)$-elliptic} if it admits a degree-$d$ morphism to a smooth curve of genus $h$. 
In this work we define a stable curve $C$ to be \emph{$(d, h)$-elliptic} if it is a limit of smooth $(d, h)$-elliptic curves in the moduli space $\overline{\mathcal M}_g$ of stable curves. More precisely, this occurs if there is a smoothing $f:\mathcal C \rightarrow S$ whose generic fiber is a smooth $(d, h)$-elliptic curve and the special fiber is isomorphic to $C$. 
%We say a stable curve is bielliptic if it is $(2,1)$-elliptic. 

Alternativelly, $(d,h)$-ellipticity can be characterized in terms of admissible covers.
A \emph{$d$-sheeted admissible cover} consists of a finite morphism $\pi: C \rightarrow B$ of degree $d$, such that $B$ and $C$ are nodal curves and:
\begin{enumerate}
\item [$1.$] $\pi^{-1}(B^{\text{sing}})= C^{\text{sing}};$

\item[$2.$] $\pi$ is simply branched away  from $C^{\text{sing}}$, that is, over each smooth point of $B$ there exists at most one point of $C$ where $\pi$ is ramified and this point has ramification index 2;

\item[$3.$] $B$ is a stable pointed curve, when considered with the smooth branch points of $\pi$; 
%on the branch locus of $\pi$;

\item [$4.$] for every node $q$ of $B$ and every node $n$ of $C$ lying over it, the two branches of $C$ over $n$ map to the branches of $B$ over $q$ with the same ramification index.
\end{enumerate}

Let $C$ and $C'$ be nodal curves. We say that $C'$ is \textit{stably equivalent} to $C$
if $C$ can be obtained from $C'$ by contracting to a point some of the smooth rational components of $C'$ meeting the other components of $C'$ in only one or two points. 
We remark that  stable equivalence is not an equivalence relation, but rather a partial order 
and thus we write $C'\steq C$ to mean that $C'$ is stably equivalent to $C$.
Note moreover that, in this case, %if $C'$ is stably equivalent to $C$ then 
$g(C)=g(C')$ and there exists a \emph{contraction map} $\tau:C' \rightarrow C$. 
We say that a point $p' \in C'$ \textit{lies over} a point $p \in C$ if $\tau(p')=p$.

\begin{theorem}\label{thm:mochi}
A stable curve $C$ is $(d,h)$-elliptic if and only if there exists a $d$-sheeted admissible cover ${C}'\rightarrow B$, where ${C}'\steq C$ %is stably equivalent to $C$ 
and $g(B)=h$.
\end{theorem}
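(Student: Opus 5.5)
This is essentially Harris--Mumford's theorem on compactifying Hurwitz spaces by admissible covers, in the generalized form of Mochizuki and Abramovich--Corti--Vistoli for targets of arbitrary genus $h$. The plan is to derive the statement from the properness of the stack of $d$-sheeted admissible covers over $\overline{\mathcal M}_{h,b}$ and from its smoothability. Here $b=2g-2-d(2h-2)$ is the number of simple branch points forced by Riemann--Hurwitz. We work with the stack $\overline{\mathcal H}_{d,h,b}$ of admissible covers $C'\to B$ with $(B,q_1,\dots,q_b)$ a $b$-pointed stable curve of genus $h$. There is a natural morphism $\overline{\mathcal H}_{d,h,b}\to\overline{\mathcal M}_g$, obtained by stabilizing the source $C'$, that is, contracting the unstable rational components.

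For the ``only if'' direction, suppose $C$ is $(d,h)$-elliptic, with a smoothing $f:\mathcal C\to S$ whose generic fiber admits a degree-$d$ map to a smooth genus-$h$ curve. First, after a finite base change and shrinking $S$ to the spectrum of a DVR, we may assume the generic cover is simply branched. If it is not, we perturb: simply branched covers are dense in the Hurwitz space of degree-$d$ covers of genus-$h$ curves with fixed $g$. A diagonal argument in $\overline{\mathcal M}_g$ then shows that $C$ is still a limit of simply branched ones. Second, we apply the valuative criterion of properness for $\overline{\mathcal H}_{d,h,b}$. After a further base change, the generic admissible cover extends to an admissible cover $\mathcal C'\to\mathcal B$ over $S$. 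Third, we compare the special fiber $C'$ with $C$. Both $\mathcal C'$ and $\mathcal C$ are families with the same generic fiber, and $\mathcal C$ is the stable model. Stable reduction therefore says that the stabilization of $C'$ is $C$. Stabilization contracts only smooth rational components meeting the rest in one or two points, so $C'\steq C$. The fiber $B$ of $\mathcal B$ has genus $h$ by flatness.

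For the ``if'' direction, start from an admissible cover $\pi:C'\to B$ with $C'\steq C$ and $g(B)=h$. We use the local smoothability of admissible covers. By conditions 1 and 4, at a node $xy=0$ of $C'$ over a node $uv=0$ of $B$ with common ramification index $e$, the cover is locally $u=x^e$, $v=y^e$. Smoothing $B$ locally as $uv=t$ lifts to the smoothing $xy=s$ of $C'$ with $s^e=t$. These local deformations glue, since the deformation space of $(B,q_i)$ is unobstructed and admissible covers deform étale-locally over it. The result is a family over a DVR whose generic member is a smooth degree-$d$ cover of a smooth genus-$h$ curve, with special fiber $\pi$. Stabilizing the total family of sources gives a smoothing of $C$, as in the previous paragraph, so $C$ is $(d,h)$-elliptic.

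The main obstacle is the properness and extension step: showing that an admissible cover exists in the limit, i.e. the valuative criterion for $\overline{\mathcal H}_{d,h,b}$. The proof goes through the stable limit of the pointed target $(\mathcal B,q_i)$. One normalizes $\mathcal B$ in the function field of the generic cover, after a base change that kills the monodromy of the ramification over the nodes (Abhyankar's lemma). One then checks that the resulting source is nodal and satisfies conditions 1--4. In the paper I would cite this step from Harris--Mumford and Mochizuki rather than reprove it.
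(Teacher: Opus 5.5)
Your proposal is correct and takes essentially the same approach as the paper, whose entire proof is the citation ``Follows from \cite{mochizuki}.'' You simply unpack what that citation supplies: properness of the admissible-cover stack over $\overline{\mathcal M}_{h,b}$, including the reduction to a simply branched generic cover, local smoothability of admissible covers, and separatedness of $\overline{\mathcal M}_g$ to identify the stabilized source with $C$.
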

    \begin{proof}
Follows from  \cite{mochizuki}.        
    \end{proof}

Now we introduce a slightly relaxed but, as we'll see in Proposition \ref{prop:equivpseudo}, equivalent notion of admissibility. 
Let $C$ and $B$ be nodal curves. A finite morphism $\pi: C \rightarrow B$ of degree $d$ is called a \emph{$d$-sheeted pseudo-admissible cover} if it satisfies conditions (1), (2) and (4) of an admissible cover and also satisfies:
\begin{enumerate}
	\item [${3}'$.] the curve $B$  has no internal nodes.
\end{enumerate}

The next result is a pseudo-admissible version of \cite[Lemma 3.2]{paper1}, and we include it here for the sake of completeness. 

\begin{lemma}\label{lemma:components}%\label{l1}
    Let $C$ be a nodal curve and $\pi:{C}'\rightarrow B$ be a  $d$-sheeted pseudo-admissible cover, where $C'\steq C$. Then for each irreducible component $C_r$ of $C$ there is a unique irreducible component ${C}'_r$ of ${C}'$ such that $\tau({C}'_r)= C_r$ and $\tau|_{{C}'_r}$ is a normalization map, where $\tau\:C'\rightarrow C$ is the contraction map.
\end{lemma}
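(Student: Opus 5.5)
The plan is to work directly with the contraction map $\tau\colon C'\rightarrow C$. The pseudo-admissible structure of $\pi$ plays essentially no role; only the fact that $C'\steq C$ is used. By definition, $C$ is obtained from $C'$ by contracting a collection $\mathcal E$ of smooth rational components of $C'$, each of which meets the rest of $C'$ (more precisely, the other components at the moment of contraction) in one or two points. The argument has three steps: set up the contraction, prove existence of $C'_r$, and then prove uniqueness together with the normalization property.

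\textbf{Step 1: structure of the contraction.} First I will fix the set-up. The map $\tau$ sends every component in $\mathcal E$ to a point, and it is an isomorphism from $C'\setminus \bigcup_{E\in\mathcal E}E$ onto its image in $C$. The complement of that image is a finite set of points. Hence the irreducible components of $C'$ not in $\mathcal E$ are in bijection with the irreducible components of $C$, with $\tau(C'_r)=C_r$. To make this rigorous I will argue by induction on the number of contracted components. It suffices to treat a single contraction $C''\rightarrow C'''$ of one smooth rational component $E$ meeting the rest in one or two points, and then compose. For a single contraction, the image of $E$ is a smooth point if $E$ meets the rest in one point. If $E$ meets the rest in two points, its image is a node, which is either a new node joining two branches or a self-node of one component.

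\textbf{Step 2: existence.} For each component $C_r$ of $C$, its generic point has a unique preimage under $\tau$. Because $\tau$ is an isomorphism over a dense open subset of $C_r$, the closure of that preimage is an irreducible component $C'_r$, and it is not contracted. Then $\tau(C'_r)$ is closed and irreducible and contains a dense open subset of $C_r$, so $\tau(C'_r)=C_r$.

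\textbf{Step 3: uniqueness and normalization.} For uniqueness, any component $D$ of $C'$ with $\tau(D)=C_r$ is not contracted, since its image is one-dimensional. Its generic point therefore maps to the generic point of $C_r$, which has a unique preimage, so $D=C'_r$. For the normalization claim, $\tau|_{C'_r}\colon C'_r\rightarrow C_r$ is finite and birational. It remains to see that $C'_r$ is smooth, and this is the main obstacle. A self-node of $C'_r$ could a priori survive, or the contraction of a chain could create a node of $C_r$ whose two branches both lie on $C'_r$. In the latter case $C'_r$ itself stays smooth there, which is fine.

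\textbf{The obstacle: possible self-nodes of $C'_r$.} The real issue is whether $C'_r$ can have a self-node. Here I will use stable equivalence in its intended form: $C'$ is obtained from $C$ by inserting rational chains at nodes, or rational tails at smooth points, or, as in \cite[Lemma 3.2]{paper1}, by partially normalizing nodes and joining the two branches through a chain of rational curves. With this reading, $\tau|_{C'_r}$ identifies $C'_r$ with a partial normalization of $C_r$. If $C'_r$ still has a self-node, the claim must be read with $\tau|_{C'_r}$ composed with the normalization of $C'_r$. If that reading is not intended, I would instead rely on the pseudo-admissibility conditions, as follows. By condition (1), any node of $C'_r$ maps to a node of $B$. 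By condition ($3'$), $B$ has no internal nodes, so every node of $B$ lies on two distinct components. By condition (4), the two branches of a node of $C'$ over such a node of $B$ map to the two distinct components of $B$. Since $C'_r$ is irreducible, it maps to a single component of $B$, so both branches of a self-node would map to that same component, which is a contradiction. Hence $C'_r$ is smooth, and $\tau|_{C'_r}$ is a finite birational morphism from a smooth curve onto $C_r$, i.e. the normalization map.
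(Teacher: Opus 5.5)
Your final argument is essentially the paper's proof. A self-node of $C'_r$ would map by (1) to a node of $B$, which by ($3'$) joins two distinct components, yet both its branches lie on $C'_r$ and so map into the single component $\pi(C'_r)$. This contradicts (4), which you rightly make explicit where the paper leaves it implicit. The unique non-contracted component dominating $C_r$ is therefore a smooth curve mapping finitely and birationally onto $C_r$, i.e.\ its normalization.

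Drop the opening claim that pseudo-admissibility plays no role, and drop the option of reinterpreting the statement. Take $C'=C$ an irreducible one-nodal curve with $\tau=\mathrm{id}$: stable equivalence alone then makes the lemma false. The pseudo-admissibility argument is the proof, not a fallback.
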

\begin{proof}
    As the components of $B$ are smooth and $\pi^{-1}(B^{sing})= C^{sing}$, then the components of ${C}'$ are also smooth. Fixing a component $C_r$ of $C$, by definition, there is an unique component ${C}'_r$ of $\tau^{-1}(C_r)$ dominating $C_r$. Since ${C}'_r$ is smooth, $\tau|_{{C}'_r}$ is a normalization map.
\end{proof}

The following two results have appeared in \cite{paper1}, albeit in the context of admissible covers to curves of genus 0.

%The following result shows that condition $(2)$ on the definition of pseudo-admissible covers can, in some sense, always be achieved given the other three conditions.

\begin{lemma}\label{lem:condition2}%\label{ref4}
    Let $\pi: C \rightarrow B$ be a degree-$d$ finite morphism of nodal curves. If $\pi$ satisfies conditions $(1), ({3}')$ and $(4)$ of $d$-sheeted pseudo-admissible covers, then there is a $d$-sheeted pseudo-admissible cover ${\pi}':{C}'\rightarrow {B}' $ such that ${C}'$ (respectively ${B}'$) is stably equivalent to $C$ (respectively $B$), contains $C$ (respectively $B$) as a subcurve and ${\pi}'|_{C}=\pi$.
\end{lemma}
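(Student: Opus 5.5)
The plan is to repair condition $(2)$ locally over each bad smooth branch point of $B$, keeping everything else fixed. Let $q_1,\ldots,q_k$ be the smooth points of $B$ over which $\pi$ is not simply branched. There are only finitely many, since $\pi$ is finite and so has finitely many ramification points. These are the points where either two or more points of $\pi^{-1}(q_i)$ ramify, or some point has ramification index at least $3$. For each such $q_i$, write $\pi^{-1}(q_i)=\{p_{i1},\ldots,p_{im_i}\}$ with ramification indices $e_{ij}$, so that $\sum_j e_{ij}=d$.

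For each $i$ we attach to $B$ a smooth rational component $R_i\cong\mathbb P^1$ meeting $B$ transversally at the single point $q_i$. This gives $B'$, which contains $B$ as a subcurve and is stably equivalent to $B$, because $R_i$ meets the rest in one point. Similarly, for each $j$ we attach to $C$ at $p_{ij}$ a smooth rational component $E_{ij}\cong\mathbb P^1$, giving $C'$, which contains $C$ and satisfies $C'\steq C$. We then define $\pi'$ to be $\pi$ on $C$, and on $E_{ij}$ to be a map $\phi_{ij}\colon E_{ij}\to R_i$ of degree $e_{ij}$. We require $\phi_{ij}$ to be totally ramified at the attaching point (sent to the attaching point of $R_i$), and to be simply branched elsewhere with all its other branch points distinct from each other and from those of $\phi_{ij'}$ for $j'\neq j$. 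Such maps exist by Riemann existence, or explicitly by a general perturbation of $x^{e}$ composed with automorphisms. A map $\mathbb P^1\to\mathbb P^1$ of degree $e$ totally ramified at one point and otherwise simply branched has $e-1$ further simple branch points by Riemann--Hurwitz, and these can be chosen generally. Over $R_i$ the total degree is $\sum_j e_{ij}=d$, so $\pi'$ is finite of degree $d$.

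We then check the conditions. Condition $(1)$ holds because the new nodes of $B'$ are the points $q_i$, whose preimages are exactly the new nodes $p_{ij}$, and the old nodes are unchanged. Condition $(4)$ at the new node $p_{ij}$ holds because the branch on $C$ has index $e_{ij}$ and the branch on $E_{ij}$ has index $e_{ij}$ by total ramification; at the old nodes it holds by hypothesis. Condition $(3')$ holds because the only new nodes of $B'$ lie in $B\cap R_i$, between two distinct components, and no internal node is created. Condition $(2)$ holds because the former bad points $q_i$ are now nodes, so they no longer count. The other smooth branch points on $B$ were simple already, and the branch points on $R_i$ are simple and distinct by the general choice of the $\phi_{ij}$. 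Finally, $\pi'|_C=\pi$ by construction.

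The main obstacle is the existence of the maps $\phi_{ij}$ with pairwise disjoint simple branch loci over the common target $R_i$, together with the ramification over the attaching point. I would handle this by taking one such map and post-composing with a general automorphism of $R_i$ fixing the attaching point, which moves the other branch points to general position. A degenerate case to watch is $e_{ij}=1$: there I would simply omit the component, or use an isomorphism $E_{ij}\to R_i$, which is harmless.
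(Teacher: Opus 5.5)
Your proposal is correct and is essentially the paper's proof, namely the construction of \cite[Theorem 3.3 (a)]{paper1}: over each smooth point of $B$ that is not simply branched, glue a $\mathbb P^1$ to $B$ and a $\mathbb P^1$ at every preimage $p_{ij}$, mapping the latter with degree $e_{ij}$, totally ramified at the attaching point and simply branched elsewhere, away from the branch points of the other tails. The one thing to fix is the case $e_{ij}=1$: omitting $E_{ij}$ is not allowed, because $p_{ij}$ would then be a smooth point of $C'$ over the node $q_i$ of $B'$ (violating condition $(1)$) and the degree over $R_i$ would drop below $d$, so the isomorphic copy is mandatory, as your own count $\sum_j e_{ij}=d$ already presupposes.
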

\begin{proof}
The proof is {\it mutatis mutandis} that of \cite[Theorem 3.3 (a)]{paper1}. 
%
%    The proof follows the same steps as that of \cite[Theorem 3.3(a)]{paper1}. Let $q$ be a smooth point of $B$ such that
%    $$\pi^{-1}(q)=\lambda_1 m_1+ \cdots + \lambda_l m_l$$
%    where $m_1,\cdots, m_l \in C$ and $\lambda_j\geq 3$ for some $j=1,\ldots, l$ or $\lambda_j= \lambda_{{j}'}=2$ for some $j,{j}'=1,\ldots, l$ with $j\neq {j}'$. We glue:
%    \begin{enumerate}
%        \item [$\bullet$] a copy of $\mathbb{P}^1$ at $q$ denoted by $B_q$;
%
%        \item[$\bullet$] a copy of $\mathbb{P}^1$ at every $m_j$, denoted by $L_j$, mapping to $B_q$ via a degree-$\lambda_j$ map totally ramified at $m_j$, simply ramified away from $m_j$ and unramified over the branch points of $L_{{j}'}\rightarrow B_q$ for $j,{j}'=1, \ldots, l$ with $j\neq {j}'$.
%    \end{enumerate}
%
%    At the end of this process we obtain from $C$ a nodal curve ${C}'$ stably equivalent to $C$, from $B$ a nodal curve ${B}'$ stably equivalent to $B$, and a degree-$d$ map ${\pi}':{C}'\rightarrow {B}'$ given by $\pi$ when restricted to $C$, and by the maps described above when restricted to the added rational components of ${C}'$. By construction, ${\pi}$ satisfies conditions $(1), (2), (3')$ and $(4)$ of an pseudo-admissible cover.
\end{proof}

\begin{proposition}\label{prop:teo3.4b}
Let $C$ be a nodal curve and $n$ be a node of $C$ such that the 
normalization $C_n$ of $C$ at $n$ is connected.
Let $\pi\:C_n'\rightarrow B$ be a finite map of degree $d$ between nodal curves, satisfying conditions 
$(1)$, $(3')$ and $(4)$ of pseudo-admissible curves, where $C_n'\steq C_n$ .

Let $n^{(1)}, n^{(2)} \in C_n$ be the branches of $n$ and  let ${n}'^{(1)}$, ${n}'^{(2)}$ be smooth points of ${C}'_n$ lying over $n^{(1)}$ and $ n^{(2)}$, respectively.
If $\pi(n'^{(1)})=\pi(n'^{(2)})$ then there is a $d$-sheeted quasi-admissible cover $\pi'\:C'\rightarrow B'$ where $C'\steq C$, $C_n'\subset C'$ and $\pi'|_{C_n}=\pi$.
\end{proposition}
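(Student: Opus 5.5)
The plan is to glue the two points $n'^{(1)}$ and $n'^{(2)}$ of $C_n'$ together, producing a nodal curve over $C$. We then correct the target near $q:=\pi(n'^{(1)})=\pi(n'^{(2)})$ so that the new node lies over a node of the target, with matching ramification indices. Finally, Lemma \ref{lem:condition2} restores simple branching. (We read ``quasi-admissible'' in the statement as pseudo-admissible.)

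\textbf{Step 1: the local structure over $q$.} Since $n'^{(1)},n'^{(2)}$ are smooth points of $C'_n$ and $\pi^{-1}(B^{\text{sing}})=C_n'^{\text{sing}}$, the point $q$ is a smooth point of $B$. Write
$$\pi^{-1}(q)=e_1 n'^{(1)}+e_2 n'^{(2)}+\sum_{j\geq 3}e_j m_j,$$
with $\sum_j e_j=d$.

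\textbf{Step 2: modify the target and the source.}
\begin{enumerate}
\item Attach a copy $B_q\cong\mathbb P^1$ to $B$ at $q$, giving $B'=B\cup B_q$. Then $B'\steq B$.
\item For each $j\geq 3$, attach a copy $L_j\cong\mathbb P^1$ to $C_n'$ at $m_j$. Map $L_j$ to $B_q$ by a degree-$e_j$ map that is totally ramified at $m_j$ (over $q$) and has all other branch points general.
\item For $n'^{(1)}$ and $n'^{(2)}$, attach a single rational component $L_0\cong\mathbb P^1$ meeting $C'_n$ at both $n'^{(1)}$ and $n'^{(2)}$. Map $L_0\to B_q$ with degree $e_1+e_2$, so that the fiber over $q$ is $e_1 n'^{(1)}+e_2 n'^{(2)}$ and the remaining branching is simple and general.
\end{enumerate}
Such an $L_0\to\mathbb P^1$ exists by Riemann existence. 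Choose permutations $\sigma_q$ of cycle type $(e_1,e_2)$ and transpositions $\tau_i$ with $\sigma_q\tau_1\cdots\tau_k=1$ generating a transitive group, with $k=e_1+e_2$ so that the cover has genus $0$.

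The resulting curve $C'$ contains $C'_n$. Contracting $L_0$ (which meets the rest in two points) and the $L_j$ (which meet it in one point), and then the contractions from $C'_n$ to $C_n$, recovers exactly $C$ with $n$ as the image of $L_0$. Hence $C'\steq C$. If $C'_n$ already had its contractible components this still works, since $\tau$ composes.

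\textbf{Step 3: verify conditions (1), (3') and (4).}
\begin{itemize}
\item Condition (1): new nodes lie exactly over the new node $q$ of $B'$, and old nodes lie over old nodes.
\item Condition (3'): the new node of $B'$ joins $B$ and $B_q$, two distinct components, so it is not internal.
\item Condition (4): at each new node the branch on $C_n'$ maps to $B$ with index $e_j$, and the branch on $L_0$ or $L_j$ maps to $B_q$ with the same index $e_j$ by construction.
\end{itemize}

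\textbf{Step 4: simple branching.} The map restricted to the original $C'_n$ may still fail condition (2) at smooth points. Apply Lemma \ref{lem:condition2} to obtain a pseudo-admissible cover $\pi''\colon C''\to B''$ with $C''\steq C'\steq C$ and $C''\supset C'\supset C'_n$. Its restriction to $C_n'$ is still $\pi$, because Lemma \ref{lem:condition2} leaves $C'$ untouched.

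\textbf{Main obstacle.} The delicate points are the existence of the connected genus-$0$ cover $L_0$ with prescribed fiber $e_1,e_2$ over $q$, and making sure $L_0$ is rational so that it is contractible. Genus $0$ is exactly Riemann--Hurwitz with $e_1+e_2$ simple branch points. Transitivity can be arranged by choosing transpositions that link the two cycles of $\sigma_q$ and realize a factorization, for instance a standard minimal factorization of the product permutation. A secondary point is checking transitivity of stable equivalence and that $\pi'$ is finite of degree $d$ over each component of $B'$. Over $B_q$ the degree is $e_1+e_2+\sum_{j\geq3} e_j=d$, as required.
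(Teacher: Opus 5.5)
Your proposal is correct and is essentially the construction the paper relies on by deferring to \cite[Theorem 3.4(b)]{paper1}. That construction attaches a rational component $B_q$ at $q=\pi(n'^{(1)})=\pi(n'^{(2)})$, a rational bridge through $n'^{(1)},n'^{(2)}$ mapping to $B_q$ with ramification indices $e_1,e_2$ there, and rational tails at the remaining points of $\pi^{-1}(q)$ totally ramified over $q$, and then applies Lemma \ref{lem:condition2}. (You can skip the monodromy argument: an explicit map such as $z\mapsto (z^{e_1+e_2}+1)/z^{e_1}$ gives the bridge, and simple branching on the new components need not be arranged by hand since Lemma \ref{lem:condition2} takes care of it.)
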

\begin{proof}
The proof is {\it mutatis mutandis} that of \cite[Theorem 3.4 (b)]{paper1}. 
\end{proof}

The next result describes a gluing of a pseudo-admissible cover, similar to those on \cite{paper1}, but more suited to study $(d,h)$-ellipticity for $h\geq 1$.

\begin{proposition}\label{prop:construction}%\label{ref5}
Let $C$ be a stable curve and 
let $C_n$ be  the normalization of $C$ at nodes 
$n_1,\ldots,n_{\delta}$. 
Assume that $C_n$ is connected and let $\pi: {C}'_n\rightarrow B$ be a finite map of degree $d$ satisfying  conditions $(1), ({3}')$ and $(4)$ of pseudo-admissible cover, where $B$ is a curve 
 of genus $h$ and ${C}'_n\steq C_n$. % is stably equivalent to $C_n$. 
 Let $n_i^{(1)}, n_i^{(2)} \in C_n$ be the branches of $n_i$ and  ${n}_i'^{(1)}$, ${n}_i'^{(2)}$ be smooth points of ${C}'_n$ lying over $n_i^{(1)}$ and $ n_i^{(2)}$, respectively, for $i=1,\ldots, \delta$.
Assume that 
\begin{enumerate}[(a)]
\item $e_\pi({n}_i'^{(1)})=e_\pi({n}_i'^{(2)})$, for $i=1,\ldots, \delta$; 
\item there exist $q_1,q_2\in B$ such that $q_1\neq q_2$ and
$\pi^{-1}(q_j)=\{n'^{(j)}_1, \ldots,n'^{(j)}_{\delta} \}$, for $j=1,2$. 
\end{enumerate} 
Then there exists a $d$-sheeted pseudo-admissible cover ${\pi}':{C}' \rightarrow {B}'$ 
where 
 $C'\steq C$, $C_n'\subset C'$ and $\pi'|_{C_n'}=\pi$.
Moreover, $g({B}')=h+1$
%such that ${C}'$ is stably equivalent to $C$, contains $C_n$ as a subcurve, and $g({B}')=h+1$, that is,
and $C$ is $(d,h+1)$-elliptic. %Furthermore, if $h=0$, then $C$ is $(d+\delta,0)$-elliptic.
\end{proposition}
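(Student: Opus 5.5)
The construction glues the two fibres over $q_1$ and $q_2$ simultaneously. Conditions (a) and (b) force $\pi$ to look the same near $q_1$ and near $q_2$: over each point the fibre consists exactly of the branch points $n_i'^{(j)}$, with matching ramification indices $e_i:=e_\pi(n_i'^{(1)})=e_\pi(n_i'^{(2)})$ and $\sum_i e_i=d$. I would do the gluing in two steps and then apply Lemma \ref{lem:condition2} and Theorem \ref{thm:mochi}.

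\textbf{Step 1: glue the target.} Identify $q_1$ with $q_2$ to get a nodal curve $\tilde B$ whose normalization at the new node $q$ is $B$. Since $B$ is connected, $q$ is non-separating, so $g(\tilde B)=h+1$. I also need to check condition $(3')$ for $\tilde B$, i.e. that $\tilde B$ has no internal node. This holds if the components of $B$ containing $q_1$ and $q_2$ are distinct.

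If $q_1$ and $q_2$ lie on the same component, I would not glue directly. Instead I would insert a rational bridge: add a $\mathbb P^1$, call it $B_0$, meeting $B$ at $q_1$ and $q_2$. This is stably harmless, since $B_0$ is a rational component meeting the rest in two points. Correspondingly, over $B_0$ I add to the source, for each $i$, a $\mathbb P^1$ denoted $L_i$, attached at $n_i'^{(1)}$ and $n_i'^{(2)}$. Each $L_i$ maps to $B_0$ by $x\mapsto x^{e_i}$, totally ramified over $0$ and $\infty$, with $0$ and $\infty$ glued to $q_1$ and $q_2$.

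This keeps condition (4), since the branches have equal ramification $e_i$. It also keeps (1), since the preimage of the new nodes is exactly the new nodes. The source becomes $C_n'\cup\bigcup L_i$, in which each $L_i$ is a rational bridge joining the two branches of $n_i$. Contracting the $L_i$ recovers $C$, so the source is $\steq C$, and the target still has genus $h+1$.

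\textbf{Step 2: glue the source (when no bridge is needed).} Identify $n_i'^{(1)}$ with $n_i'^{(2)}$ for each $i$ to get $\tilde C$. Then $\pi$ descends to $\tilde\pi:\tilde C\to\tilde B$ of degree $d$, and I would verify the following.
\begin{itemize}
\item Condition (1): $\tilde\pi^{-1}(q)$ is exactly the set of new nodes, by (b).
\item Condition (4): this is (a).
\item Condition $(3')$: this was arranged in Step 1.
\item Stable equivalence: contracting the same rational components as in $C_n'\to C_n$ gives $C$, because the points $n_i'^{(j)}$ are smooth points lying over the branches of $n_i$. Hence $\tilde C\steq C$.
\end{itemize}

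\textbf{Conclusion and main obstacle.} Lemma \ref{lem:condition2} then supplies condition (2) by attaching rational tails, without changing genera or stable-equivalence classes. This yields the pseudo-admissible cover $\pi'$ with $C_n'\subset C'$ and $\pi'|_{C_n'}=\pi$.

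For $(d,h+1)$-ellipticity I would invoke Proposition \ref{prop:equivpseudo}, which says pseudo-admissible covers characterize ellipticity; equivalently, stabilize the target and use Theorem \ref{thm:mochi}. Contracting unstable rational components of the target together with their preimages preserves the degree, $C'\steq C$, and the genus $h+1$.

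I expect the main obstacle to be this passage from pseudo-admissible to admissible. The careful points are to check that the stabilization contractions on the source are compatible with $C'\steq C$, and to handle the internal-node issue of Step 1 cleanly.
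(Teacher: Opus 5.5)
Your proposal is correct. In substance it is the paper's proof, with one inessential detour.

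Your bridge construction is exactly what the paper does:
\begin{itemize}
\item glue a $\mathbb P^1$ to $B$ at $q_1,q_2$;
\item over it, glue rational curves $L_i$ at $n'^{(1)}_i$ and $n'^{(2)}_i$, each mapping with degree $e_i$ and totally ramified at both attaching points;
\item finish with Lemma \ref{lem:condition2}.
\end{itemize}
This construction works whether or not $q_1,q_2$ lie on the same component of $B$, so your case distinction is unnecessary.

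The direct identification $q_1\sim q_2$, $n'^{(1)}_i\sim n'^{(2)}_i$ that you use when the components differ is a valid way to show that $C$ is $(d,h+1)$-elliptic. Conditions (1), $(3')$ and (4), the equality $g(\tilde B)=h+1$, and $\tilde C\steq C$ all check out as you say. However, it does not give the statement as written. There $C_n'$ is the partial normalization of $\tilde C$ at the new nodes, not a subcurve of it. So $C_n'\subset C'$ and $\pi'|_{C_n'}=\pi$ hold only after composing with the gluing map, and your closing sentence claiming them is inaccurate in that case. Use the bridge in all cases and your argument coincides with the paper's.

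The step you flag as the main obstacle is not really one. Passing from pseudo-admissible to admissible is the forward direction of Proposition \ref{prop:equivpseudo}, which contracts target components with fewer than three special points together with their preimages. That direction does not use the present proposition, so invoking it here is not circular.
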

\begin{proof}
Set $e_i:=e_\pi({n}_i'^{(1)})=e_\pi({n}_i'^{(2)})$,
for $i=1, \ldots, \delta$.
%Then, for $j=1,2$ we have
%$$\pi^{-1}(q_j)=e_1 n'^{(j)}_1+ e_2 n'^{(j)}_2+ \cdots + e_{\delta} n'^{(j)}_{\delta}$$
%$$\pi^{-1}(q_2)=e_1 n'^{(2)}_1+ \lambda_2 n'^{(2)}_2+ \cdots + \lambda_{\delta} n'^{(2)}_{\delta}.$$
To obtain the cover ${\pi}'$ we proceed as follows. We glue (see Figure \ref{fig:teogluing}):
    \begin{figure}[!h]
        \centering
        \includegraphics[width=4cm]{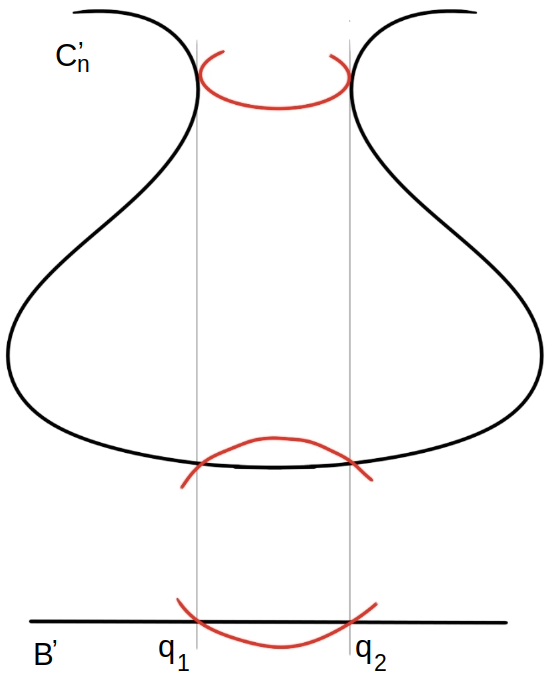}%{Modelo TD PGMAT-UFF/coberturaPseudoGrau4.png}
        \caption{$(d,h+1)$-ellipticity for $d=3$}
        \label{fig:teogluing}
    \end{figure}
    \begin{enumerate}
        \item [$\bullet$] a copy of $\mathbb{P}^1$ in $B$ passing through $q_1$ and $q_2$. Call this copy  $L$;

        \item[$\bullet$]  a copy of $\mathbb{P}^1$ in ${C}_n'$ passing through $n'^{(1)}_i$ and $n'^{(2)}_i$ mapping to $L$ via a map of degree $e_i$, totally ramified at $n'^{(1)}_i$ and $n'^{(2)}_i$, for $i=1,\ldots, \delta$. %Call this copy  $L_i$.

         \end{enumerate}

   Thus, we obtain from ${C}'_n$ a nodal curve $\widetilde {C}$ stably equivalent to ${C}$, from $B$ a nodal curve $\widetilde {B}$ such that $g(\widetilde {B})=h+1$, and the map $\widetilde {\pi} : \widetilde {C} \rightarrow \widetilde {B}$ given by the map $\pi$ when restricted to ${C}'_n$ and by the maps described above  when restricted to the added rational components of $\widetilde {C}$. By construction, ${C}'_n$ is a subcurve of $\widetilde {C}$ and the map $\widetilde {\pi}$ satisfies conditions $(1), ({3}')$ and $(4)$ of the definition of pseudo-admissible cover. 
It follows from Lemma $\ref{lem:condition2}$ that there exists a $d$-sheeted pseudo-admissible cover ${\pi}':{C}'\rightarrow {B}'$, where ${C}'$ is stably equivalent to $\widetilde {C}$ and ${B}'$  is stably equivalent to $\widetilde {B}$.
\end{proof}

We finish this section showing that the $(d,h)$-ellipticity of a stable curve can be characterized in terms of pseudo-admissible covers.

\begin{proposition} \label{prop:equivpseudo}%\label{1}
    A stable curve $C$ is $(d,h)$-elliptic if and only if there exists a $d$-sheeted pseudo-admissible cover $\pi: {C}'\rightarrow B$ where $C'\steq C$ and $g(B)= h$.
\end{proposition}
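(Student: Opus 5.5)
The plan is to prove both directions through Theorem~\ref{thm:mochi}, by converting between admissible and pseudo-admissible covers with explicit local modifications of the target. Here I read an \emph{internal node} of $B$ as a node lying on a single irreducible component, i.e.\ a self-intersection.

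For ($\Rightarrow$), suppose $C$ is $(d,h)$-elliptic. Theorem~\ref{thm:mochi} gives an admissible cover $\pi\colon C'\to B$ with $C'\steq C$ and $g(B)=h$, and only condition $(3')$ may fail.
\begin{enumerate}
\item For each internal node $q$ of $B$, insert a rational bridge $L_q\cong\mathbb P^1$ at $q$, attached to the two branches of $q$. This keeps the arithmetic genus of $B$ equal to $h$.
\item For each node $n$ of $C'$ over $q$, let $e$ be its common ramification index given by condition $(4)$. Insert a rational bridge at $n$ mapping to $L_q$ by $z\mapsto z^e$.
\item This map is totally ramified exactly at the two attaching points, which are now nodes, so it has no smooth ramification. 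Hence conditions $(1)$, $(2)$ and $(4)$ persist, and the degree is still $d$ because the $e$'s over $q$ sum to $d$.
\item The new components of $C'$ are smooth rational curves meeting the rest in two points. So the new source is stably equivalent to $C'$, hence to $C$ by transitivity of $\steq$.
\end{enumerate}
This yields the required pseudo-admissible cover.

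For ($\Leftarrow$), start from a pseudo-admissible cover $\pi\colon C'\to B$ with $C'\steq C$ and $g(B)=h$. Only condition $(3)$ can fail. So we must contract the rational components $B_0$ of $B$ carrying fewer than three special points, where special points are nodes plus smooth branch points, and do so compatibly upstairs. I expect this stabilization to be the main obstacle, and I would handle it by cases using Riemann--Hurwitz and the monodromy of $\pi$ over $B_0$. The preimage of $B_0$ is a union of components of $C'$, all smooth since $C'$ is smooth away from the preimage of $B^{\rm sing}$.
\begin{enumerate}
\item If $B_0$ has one special point, it must be a node, since a cover of $\mathbb P^1$ branched at a single point is trivial. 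Then every component over $B_0$ is a copy of $\mathbb P^1$ mapping isomorphically. These are rational tails of $C'$; contract them together with $B_0$.
\item If $B_0$ has two nodes, the cover over $B_0\setminus\{\text{nodes}\}\cong\mathbb C^*$ is a disjoint union of maps $z\mapsto z^{e}$. Contract $B_0$ and these rational bridges, gluing the two ends of each bridge. Condition $(4)$ holds because both ends of a bridge have the same index $e$.
\item If $B_0$ has one node and one smooth branch point, the local monodromy at the branch point is a transposition, so the monodromy at the node is the same transposition. The preimage is then one degree-$2$ rational tail together with $d-2$ isomorphic copies, all contracted along with $B_0$; the branch point simply disappears.
\end{enumerate}

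In each case the contracted components of $C'$ are smooth rational curves meeting the rest in at most two points. Since $C$ is stable, they are not among the components $C'_r$ of Lemma~\ref{lemma:components}, so they are contracted by $\tau$ and the new source is still stably equivalent to $C$. Each step decreases the number of components, so iterating terminates; contractions may create internal nodes in $B$, which is permitted for admissible covers. The genus of $B$ stays $h$, and conditions $(1)$, $(2)$ and $(4)$ are preserved, so the result is an admissible cover and Theorem~\ref{thm:mochi} gives that $C$ is $(d,h)$-elliptic.

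One degenerate situation needs a separate remark: when $h=0$ the whole of $B$ might be unstable. This is excluded because $C$ is stable of genus $\geq 2$, so $\pi$ must have enough branch points.
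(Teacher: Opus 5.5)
Your proof is correct and follows essentially the paper's argument. Both directions go through Theorem~\ref{thm:mochi}. In one direction you contract the unstable smooth rational components of $B$ together with the components of $C'$ over them; in the other you insert rational bridges at the internal nodes of $B$ and at the nodes of $C'$ over them, which is the gluing of Proposition~\ref{prop:construction}.

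Your version is somewhat more careful than the paper's sketch in two places. The monodromy case analysis justifies the contraction step explicitly. You also insert a bridge at \emph{every} node over an internal node of $B$, whereas the paper normalizes $C'$ only at its own internal nodes, and not every node over an internal node of $B$ is internal.

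One small correction: in your third case the branch point is not lost. It relocates to the former node, which becomes a simple smooth branch point on the neighbouring component; conditions $(1)$, $(2)$ and $(4)$ are still preserved, so your conclusion is unaffected.
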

\begin{proof}
By Theorem \ref{thm:mochi},
it is enough to show that for every pseudo-admissible cover $\pi: C \rightarrow B$ there exists an admissible cover $\widetilde{\pi}:\widetilde{C} \rightarrow \widetilde{B}$
and, conversely, for every admissible cover $\widetilde{\pi}:\widetilde{C} \rightarrow \widetilde{B}$
there exists a pseudo-admissible cover $\pi: C \rightarrow B$, 
such that $B\steq \widetilde B$ and 
$C\steq \widetilde C$  the following diagram  commutes
\begin{equation}\label{eq:diagramaadmisspseudo}
\xymatrix{	
C \ar[r]^{\tau_C} \ar[d]_{\pi} 
& \widetilde C  \ar[d]^{\widetilde\pi} 
\\ B \ar[r]^{\tau_B} 
& \widetilde B }
\end{equation}
where $\tau_C$ and $\tau_B$ are the contraction maps.

First we let $\pi: C \rightarrow B$ be a $d$-sheeted pseudo-admissible cover that is not admissible. Then there exist rational components $W$ of $B$ containing at most two points that are nodes of $B$ or are smooth branch points of $\pi$. 
%less than three points that are nodes of $B$ or are smooth branch points of $\pi$.
Note that, if $L$ is a component of $C$ such that $\pi(L)=W$, then 
$L$ meets its complement in at most two points and has genus 0, by Riemann-Hurwitz.
Since the map $\pi$ satisfies conditions $(1), (2)$ and $(4)$ of the definition of admissible cover, to satisfy condition (3) it suffices to contract those components $W$ and the components of $C$ mapping to $W$. 
Thus, we obtain curves  $\widetilde{B}$ and $\widetilde{C}$ and an
induced map $\widetilde{\pi}:\widetilde{C} \rightarrow \widetilde{B}$ as in \eqref{eq:diagramaadmisspseudo}.
%Note that 
%$B\steq \widetilde B$, 
%$C\steq \widetilde C$ 
%and $\widetilde \pi$ is a $d$-sheeted admissible cover, such that the following diagram commutes
%$$\xymatrix{	
%C \ar[r]^{\tau_C} \ar[d]_{\pi} 
%& \widetilde C  \ar[d]^{\widetilde\pi} 
%\\ B \ar[r]^{\tau_B} 
%& \widetilde B }
%$$

Conversely, let $\widetilde {\pi}:\widetilde {C} \rightarrow \widetilde {B}$ be a $d$-sheeted admissible cover that is not pseudo-admissible, that is, such that $\widetilde B$ has internal nodes. 
Now note that for $n\in \widetilde C$ and $q\in\widetilde B$ such that $\widetilde \pi(n)=q$, we have that $q$ is an internal node of $B$ if and only if $n$ is an internal node of $C$.
So we let $C'$ and $B'$  be the normalization of $C$ and $B$ at its internal nodes, respectivelly. 
Then $\widetilde \pi$ induces a $d$-sheeted admissible cover $\pi':C'\rightarrow B'$ that is also quasi-admissible. 
Moreover, if $q$ is an internal node of $B$ and $\widetilde \pi^{-1}(q)=\{n_1,\ldots,n_\delta\}$ , then 
$\pi'$ satisfies the conditions of Proposition \ref{prop:construction} with respect to the branches of $n_1,\ldots,n_\delta$.
Hence, applying
Proposition \ref{prop:construction} sucessively for each internal node of $B$, 
we obtain a pseudo-admissible cover $\pi\:C\rightarrow B$ as in \eqref{eq:diagramaadmisspseudo}.
\end{proof}

\section{$(d,h)$-ellipticity for irreducible curves}\label{sec:irred}

In this section, our goal is to characterize irreducible nodal $(d,h)$-elliptic curves in terms of their normalizations.
First, we need a lemma.

\begin{lemma}\label{lem:3lemas}
Let $C$ be a nodal curve and let ${C}'$ be a nodal curve stably equivalent to $C$ with contraction map $\tau: {C}'\rightarrow C$. 
\begin{enumerate}[(a)]
\item If $Z_1$ and $Z_2$ are two connected subcurves of ${C}'$ such that $\tau(Z_1)= \tau(Z_2)$ is a subcurve of $C$, then $\tau(Z_1)= \tau(Z_2)= \tau(Z_1\cap Z_2)$.
\item If $C$ is irreducible then no two subcurves of ${C}'$ of positive genus have finite intersection.
\item Let $\pi: {C}' \rightarrow B$ be a $d$-sheeted  pseudo-admissible cover.
Assume that $C$ is irreducible and let $C_n$ be the normalization of $C$  and let $B_0 = \pi(C_n)$. Then every subcurve $Z$ of $B$ of positive genus must contain $B_0$.

In particular, if there exists a component $W$ of $B$ such that $g(W)\geq 1$, then $W=B_0$.
\end{enumerate}
\end{lemma}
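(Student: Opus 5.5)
The plan is to prove (a), then (b), then (c), each using the previous part, with everything coming from the structure of the contraction map $\tau$. Since $C'\steq C$, $\tau$ contracts only smooth rational components meeting the rest in one or two points. The fibres of $\tau$ over points of $C$ are therefore either single points or connected trees of rational curves. In particular $\tau$ restricted to the complement of the exceptional components is an isomorphism onto its image away from finitely many points.

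For (a), let $Y=\tau(Z_1)=\tau(Z_2)$. Each non-contracted component of $C'$ maps birationally onto a unique component of $C$ (as in Lemma \ref{lemma:components}), and the non-contracted components in $\tau^{-1}(Y)$ are exactly those dominating components of $Y$. Hence $Z_1$ and $Z_2$ contain the same set of non-contracted components, namely those dominating $Y$. Their intersection $Z_1\cap Z_2$ then contains every component dominating a component of $Y$, so its image is all of $Y$. The contracted components in $Z_1\cap Z_2$ map to points of $Y$, so they do not enlarge the image. This gives $\tau(Z_1\cap Z_2)=Y$.

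For (b), let $C$ be irreducible with $Z_1,Z_2\subseteq C'$ of positive genus; passing to connected components, we may assume both are connected. A connected subcurve made only of contracted components is a tree of smooth rational curves, hence has genus $0$. So each $Z_i$ contains a non-contracted component. Since $C$ is irreducible, there is exactly one such component, $C'_1$, and it lies in both $Z_i$ with $\tau(Z_i)=C$. By (a), $Z_1\cap Z_2\supseteq C'_1$, so the intersection is not finite. The point needing care is the genus-$0$ claim: it holds because contracted components meet the rest in at most two points, so any configuration of them is a chain, a tree, or attached in a way that cannot produce a cycle avoiding $C'_1$.

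For (c), let $C'_n$ be the unique non-contracted component (isomorphic to the normalization, by Lemma \ref{lemma:components}), and let $Z\subseteq B$ be a subcurve with $g(Z)\ge1$; we may assume $Z$ connected. I would consider $\pi^{-1}(Z)$, a subcurve of $C'$, and argue that some connected component of it has positive genus. Since $\pi$ is finite and surjective onto $Z$, Riemann--Hurwitz on each component, together with condition (4) matching ramification at nodes, gives that $g$ of a connected component of $\pi^{-1}(Z)$ is at least $g(Z)\ge1$. Equivalently, a finite cover of a curve of positive genus that is compatible at nodes has positive genus, because the pullback of the dual graph cycles and of the component genera is injective on $H^1$.

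Next I would apply (b) as in its proof: a connected subcurve of $C'$ of positive genus must contain $C'_n$. Hence $C'_n\subseteq\pi^{-1}(Z)$, so $B_0=\pi(C'_n)\subseteq Z$. For the final sentence, take $Z=W$ irreducible of genus $\ge1$. Then $B_0\subseteq W$, and both are irreducible components, so $W=B_0$.

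I expect the main obstacle to be the genus inequality for $\pi^{-1}(Z)$ in the nodal case. I would first try to prove it via the injectivity of $\pi^*$ on $H^1(\mathcal O)$. If that does not go through cleanly, I would fall back on componentwise Riemann--Hurwitz plus a dual-graph cycle lifting argument.
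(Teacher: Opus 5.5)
Your proposal is correct and follows the paper's proof essentially step for step. Part (a) uses the uniqueness of the non-contracted component over each component of $C$ (Lemma \ref{lemma:components}); part (b) notes that both subcurves must contain the single non-contracted component; and part (c) takes a positive-genus connected component of $\pi^{-1}(Z)$, which must then contain $C_n$, so that $B_0\subseteq Z$.

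You are also more careful than the paper on two points it only asserts. The first is that a connected subcurve of positive genus cannot consist only of contracted components. The second is that $g(W)\ge g(Z)$ for a connected component $W$ of $\pi^{-1}(Z)$; condition (4) is exactly what makes a trace $\pi_*\mathcal O_W\to\mathcal O_Z$, defined through the normalizations, land in $\mathcal O_Z$, and it equally drives your fallback dual-graph lifting argument.
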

\begin{proof}
First we prove (a). Set $Z:=\tau(Z_1)=\tau(Z_2)$ and assume $Z$ is a subcurve of $C$.
Recall from Lemma \ref{lemma:components} that, if $W$ is a component of $C$, 
then there exists an unique component $W'$ of $C'$ such that $\tau(W')=W$. Hence if $W\subset Z$, then we must have $W'\subset Z_1$ and also $W'\subset Z_2$.
Thus $W'\subset Z_1\cap Z_2$ showing that $Z=\tau(Z_1\cap Z_2)$. 

To show (b), let $Z_1$ and $Z_2$ be two subcurves of  ${C}'$ of positive genus
so that $\tau(Z_1)$ and $\tau(Z_2)$ are subcurves of $C$. 
Since $C$ is irreducible, we must have 
$\tau(Z_1)= \tau(Z_2)$ which, by (a), implies that $Z_1\cap Z_2$ cannot be finite.

Lastly we show (c).
Assume there exists a subcurve $Z$ of $B$ of positive genus not containing $B_0$. 
Then $C_n$ is not contained in $\pi^{-1}(Z)$. 
Let $W$ be a connected component of $\pi^{-1}(Z)$. 
Since $\pi(W)=Z$, we must have 
$g(W)\geq g(Z)\geq 1$. 
In particular, this implies that $W$ is not contracted by $\tau$. 
But since $C$ is irreducible, this means $\tau(W)=C=\tau(C_n)$ and, by (a), $W\cap C_n$ cannot be finite.  As $C_n$ is irreducible, this implies that $C_n\subset W$, a contradiction.
\end{proof}

\begin{proposition} \label{prop:degreeofrestriction}% \label{p3}
Let $C$ be an irreducible nodal curve and $C_n$ be its normalization. 
Let  $\pi:{C}'\rightarrow B$ be a $d$-sheeted pseudo-admissible cover, where ${C}'\steq C$. % is stably equivalent to $C$. 
If $g(B)\geq 1$, then $\pi|_{C_n}$ has degree $d$.
\end{proposition}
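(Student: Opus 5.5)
We identify $C_n$ with the unique component $C'_r$ of $C'$ dominating $C$, as in Lemma \ref{lemma:components}; it is smooth and isomorphic to the normalization of $C$. Set $B_0:=\pi(C_n)$, a component of $B$. The map $\pi$ is finite of degree $d$, so over a general point of $B_0$ there are $d$ preimages counted with multiplicity. These lie on the components of $C'$ mapping onto $B_0$. Hence the claim $\deg \pi|_{C_n}=d$ is equivalent to the following: $C_n$ is the only component of $C'$ dominating $B_0$. The plan is to assume there is a second component $L\neq C_n$ of $C'$ with $\pi(L)=B_0$ and derive a contradiction with $g(B)\geq 1$ using Lemma \ref{lem:3lemas}.

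Every component of $C'$ other than $C_n$ is contracted by $\tau$, so it is a smooth rational curve. In particular $L\cong\mathbb P^1$, and hence $g(B_0)=0$. Otherwise $B_0$ would have positive genus and could not be dominated by a rational curve. Since $g(B)\geq 1$ while $B_0$ is rational, the genus of $B$ must come from somewhere else. Either $B$ has another component $W$ with $g(W)\geq 1$, which Lemma \ref{lem:3lemas}(c) forbids, or the dual graph of $B$ has a cycle. All components of $B$ are smooth, since $B$ has no internal nodes. So in the second case there is a cycle $Z=(W_1,\dots,W_s)$ of components of $B$, which is a subcurve of genus $1$. By Lemma \ref{lem:3lemas}(c), $Z$ contains $B_0$. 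The goal is therefore to show that some cycle in $B$ avoids $B_0$, or to reach a contradiction by another route.

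For this, the key step is to pass to the preimage. Take a cycle $Z\ni B_0$ and let $Z^*$ be the subcurve obtained from $Z$ by deleting $B_0$; it is a chain through the two nodes of $Z$ on $B_0$. I would look at connected components $W$ of $\pi^{-1}(Z)$ containing $L$ but not $C_n$, aiming to show such a $W$ has positive genus. The reason is that $L$ maps onto $B_0$, so $L$ meets preimages of both of the nodes $q,q'$ where $Z^*$ meets $B_0$, by condition (1). Following preimages of the chain $Z^*$ from $L$, we either return to $L$, or land on $C_n$, or land on another component over $B_0$. In every case we get a loop in the dual graph of $\pi^{-1}(Z)$.

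A loop that avoids $C_n$ gives a positive-genus subcurve meeting $C_n$ only finitely, contradicting Lemma \ref{lem:3lemas}(b), since $C_n$ itself has positive genus when $g(C_n)\geq1$. A loop through $C_n$ is more delicate: it produces a positive-genus subcurve $W\supset L$ together with a second such subcurve, and I would pick these so that their intersection is finite, again contradicting (b). This case analysis, in particular handling $g(C_n)=0$ and loops through $C_n$, is the main obstacle. There I would first try a Riemann–Hurwitz/degree count over the chain: the total degree of $\pi$ over $Z^*$ equals $d$, which matches the $d$ sheets leaving $B_0$ at $q$. The aim is to show that sheets starting on $L$ at $q$ cannot all be absorbed by $C_n$ at $q'$. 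This should yield a cycle of rational components of $C'$ disjoint from $C_n$ except at finitely many points, which is impossible since $C'\steq C$ with $C$ irreducible forces all loops to pass through $C_n$.
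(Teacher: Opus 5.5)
There is a genuine gap, and it cannot be closed, because for $g(B_0)=0$ the statement is false. Your case $g(B_0)\geq 1$ is correct, and cleaner than the paper's: every component of $C'$ other than $C_n$ is rational, so none can dominate $B_0$. The reduction, for $g(B_0)=0$, to a rational cycle of $B$ through $B_0$ is also correct. The gap is exactly the step you flag: a loop through $C_n$ and $L$. Lemma \ref{lem:3lemas}(b) rests only on the fact that removing $C_n$ from the dual graph of $C'$ leaves a forest of rational curves. A loop $C_n,M,L,M'$ with $L$ over $B_0$ is compatible with that, since it is just a rational chain replacing a node of $C$. So you can neither find two positive-genus subcurves with finite intersection nor show that the sheets of $L$ avoid $C_n$.

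\textbf{Counterexample.} Let $B=B_0\cup E$ with $B_0\cong E\cong\mathbb P^1$ meeting at $q_1\neq q_2$; then $B$ has no internal nodes and $g(B)=1$. Let $f\colon C_n\to B_0$ be a simply branched double cover unramified over $q_1,q_2$ (e.g.\ $C_n=\mathbb P^1$), and write $f^{-1}(q_j)=\{a_j,a_j'\}$. Take $L,M_1,M_2,M_3\cong\mathbb P^1$ with isomorphisms $L\to B_0$ and $M_k\to E$. Let $b_j\in L$ and $m_k^{(j)}\in M_k$ be the points over $q_j$. Glue $a_1\sim m_1^{(1)}$, $a_2\sim m_1^{(2)}$, $a_1'\sim m_2^{(1)}$, $b_2\sim m_2^{(2)}$, $b_1\sim m_3^{(1)}$, $a_2'\sim m_3^{(2)}$.

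Then $\pi\colon C'\to B$ has degree $3$ and satisfies $(1),(2),(3'),(4)$: the only ramification is that of $f$, and every node is unramified on both branches. Contracting the bridge $M_1$ and the chain $M_2\cup L\cup M_3$ gives the irreducible stable curve $C=C_n/(a_1\sim a_2,\ a_1'\sim a_2')$. Yet $\deg\pi|_{C_n}=2$.

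This example defeats both of your tools. Every positive-genus subcurve of $C'$ contains $C_n$, so Lemma \ref{lem:3lemas}(b) never produces a contradiction. The sheet of $L$ at $q_1$ runs through $M_3$ into $C_n$ at $q_2$, and the degrees balance ($2+1=1+1+1$), so the sheet count sees nothing wrong.

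The paper's proof breaks at the same place. It asserts that the lift of the chain starting on $Z$ over $q_1$ returns to $Z$ over $q_2$. Here the lift starting at $b_1\in Z=L$ is $M_3$, whose point over $q_2$ lies on $C_n$.

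The failure also reaches Theorem \ref{thm:main}. Take $C_n$ hyperelliptic of genus $\geq 6$ in the construction. By Castelnuovo--Severi, $C_n$ has no degree-$3$ map to a curve of genus $\leq 1$. Yet $C$ is $(3,1)$-elliptic: contracting $E$ and the $M_k$ gives an admissible cover of a nodal rational curve of genus $1$. So the forward direction of Theorem \ref{thm:main} fails too.

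An extra hypothesis such as $g(B_0)\geq 1$ is needed, and under it your argument is complete.
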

\begin{proof}
Set $B_0: = \pi(C_n)$ and 
let $e$ be the degree of the restriction  $\pi|_{C_n}$.  %$\pi_n := \pi|_{C_n} : C_n \rightarrow B_0$.
Assume $1\leq e \leq d-1$. 
Since $\pi$ is a map of degree $d$, then there exists a connected subcurve $Z$ of ${C}'$ not containing $C_n$ such that $C_n \cup Z\subset \pi^{-1}(B_0) $. 
If $e'$ is the degree of $\pi|_Z$, then $e'\geq 1$ and $e+e'\leq d$. 
We have two cases to consider.

If $g(B_0)\geq 1$ then $C_n$ and $Z$ must have positive genus as well, since they map to $B_0$. 
By Lemma \ref{lem:3lemas}, we then have that $C_n \cap Z$  is not finite
and,  since $C_n$ is irreducible, we must have $C_n\subset Z$ contradicting the choice of $Z$.

%
%\begin{figure}[!h]
%\centering
%\includegraphics[width=0.5\linewidth]{Modelo TD PGMAT-UFF/Mapa_.png}
%\caption{Case 2}
%\label{fig:case2}
%\end{figure}

Now assume $g(B_0)=0$. Then, by Lemma \ref{lem:3lemas},
every irreducible component of $B$ has genus 0  and, since $g(B)\geq 1$, there exists a rational cycle $E=(E_1,\ldots,E_s)$ in $B$ which  must contain $B_0$, say $E_1=B_0$. 
%Let $Z$ be the connected component of $\pi^{-1}(B\setminus B_0)$ containing the rational cycle $E$. 
Let
$q_1,p_1, \ldots, p_{s-1},q_2\in B$ be such that
$$E_1 \cap B_0 = {q_1},
\quad E_s \cap B_0 = {q_2},
\quad \text{ and } \quad 
E_{i}\cap E_{i+1}= {p_{i}},
\quad \text{for } i =1, \ldots, s-1. $$

Let $q^1_1 \in C_n$ and $q_1^2 \in Z$ such that
$$\pi(q_1^1)= \pi(q_1^2)= q_1$$
Since $q_1$ is a node and $\pi$ is pseudo-admissible, then $q_1^1$ and $q_1^2$ are also nodes. Therefore, there exist rational components $E_1^1$ and $E_1^2$ of $C'$  such that
$$q_1^1 = C_n \cap E_1^1\ee q_1^2= Z \cap E_1^2.$$

Note that we must have $\pi(E_1^1)= \pi(E_1^2)= E_1$. 
Furthermore, since $E_1$ has another node, namely $p_1$, then $E_1^1$ and $E_1^2$ also have nodes, say $p_1^1$ and $p_1^2$, respectively, such that
$$\pi(p_1^1)= \pi(p_1^2)= p_1. $$
Consequently, there exist rational components $E_2^1$ and $E_2^2$  of $C'$ such that
$$p_1^1 = E_1^1 \cap E_2^1\ee p_1^2= E_1^2 \cap E_2^2$$
and again
we have, $\pi(E_2^1)=\pi(E_2^2)=E_2$.

Prceeding in this manner, for $j=1,2$ we obtain rational components 
%$E_1^1,E_2^1,\ldots, E_{s-1}^1$ and
$E_1^j,E_2^j,\ldots, E_{s}^j$  of $C'$
such that  $\pi(E_i^j)=E_i$
and
$E^j_i$ meet $E^j_{i+1}$ at $p^j_{i+1}$ for $i=1,\ldots,s-1$.
Lastly, since $E_{s}$ meets $B_0$ at the node $q_2$, then 
$ E_{s}^1$ meets $C_n$ and 
$ E_{s}^2$ meets $Z$.
Hence 
$$C_n \cup E_1^1 \cup \ldots \cup E_s^1 
\ee
Z\cup E_1^2\cup \ldots \cup E_s^2$$
are two subcurves of ${C}'$ of positive genus and having no common component, contradicting Lemma \ref{lem:3lemas}, thus showing that $e=d$.
 \end{proof}   

We remark that the previous result  does not hold for admissible or pseudo-admissible covers to a curve of genus 0, as attested by \cite[Theorem 3.3 and Corollary 3.5]{paper1}.

\begin{lemma}\label{lem:lemablock} %  \label{ref3}
Let $C$ be an irreducible nodal curve and $C_n$ be its normalization. 
Let $\pi:{C}'\rightarrow B$ be
a $d$-sheeted pseudo-admissible cover, 
where ${C}'\steq C$. % is stably equivalent to $C$ and $B$ is a nodal curve of genus $h$
%Let $B_0:=\pi(C_n)$.
Assume that $g(B)\geq 1$. % and {\color{red}$g(B_0) < g(B)$}.
Let $n\in C$ be a node and denote by $n^1,n^2\in C_n$ its branches.
If $\pi(n^1)\neq \pi(n^2)$,  then there exists a cycle $Z$ in $B$ such that  $\pi(n^1)$ and $ \pi(n^2)$ are nodes of $Z$.
Moreover, we have $e_\pi(n^1)=e_\pi(n^2)$.
\end{lemma}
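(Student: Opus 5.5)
Throughout, I identify $C_n$ with the unique component of $C'$ dominating $C$, as in Lemma \ref{lemma:components}, and set $B_0:=\pi(C_n)$. The plan is to first describe the part of $C'$ lying over the node $n$. Since $C$ is irreducible, every component of $C'$ other than $C_n$ is a smooth rational component contracted by $\tau$.

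\emph{The chain over $n$.} If $n^1,n^2$ were glued directly in $C'$, the resulting node would map to a single node of $B$, giving $\pi(n^1)=\pi(n^2)$. So under our hypothesis $\tau^{-1}(n)$ is a rational chain $R=R_1\cup\cdots\cup R_k$ with $k\geq 1$, where $R_1$ meets $C_n$ exactly at $n^1$ and $R_k$ meets $C_n$ exactly at $n^2$. Let $Y$ be the connected component of $\pi^{-1}(\overline{B\setminus B_0})$ containing $R$. By Proposition \ref{prop:degreeofrestriction}, $\pi|_{C_n}$ has degree $d$, so $\pi^{-1}(B_0)=C_n$ and no component of $R$ maps to $B_0$. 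A connected union of components other than $C_n$ is contracted by $\tau$, so $Y$ is contracted to a point. That point must be $n$, hence $Y=R$ and $Y\cap C_n=\{n^1,n^2\}$.

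\emph{The cycle.} By condition (1), $q_1:=\pi(n^1)$ and $q_2:=\pi(n^2)$ are nodes of $B$. Since $B$ has no internal nodes, $q_j=B_0\cap \pi(R_{i_j})$ with $i_1=1$ and $i_2=k$. The image $\pi(R)$ is connected and does not contain $B_0$. I will choose a shortest sequence of distinct components $W_1,\dots,W_s$ of $\pi(R)$ with $q_1\in W_1$, $q_2\in W_s$, and $W_i$ meeting $W_{i+1}$. Then $(B_0,W_1,\dots,W_s)$ is a cycle $Z$ in $B$. When $s=1$ it is a cycle of two components meeting at the distinct points $q_1,q_2$. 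By construction $q_1$ and $q_2$ are nodes of $Z$. The only point needing care is that minimality makes the $W_i$ meet only consecutively, which is standard in the dual graph.

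\emph{Equality of ramification.} This is the step I expect to need the most care. The key claim is a balancing fact: for a connected component $Y$ of the preimage of a connected subcurve $B_1\subset B$, the degree of $Y$ over each component of $\pi(Y)$ is the same. To see this, let $p$ be a node between components $W,W'$ of $\pi(Y)$. Condition (1) says every point of $\pi^{-1}(p)$ is a node. Since $B$ has no internal nodes, each such node joins a component over $W$ with a component over $W'$. Condition (4) then gives equal ramification indices on the two sides at each such node. Summing over the points of $Y$ above $p$, the degree of $Y$ over $W$ equals its degree over $W'$, and connectedness of $\pi(Y)$ propagates this equality to all components.

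Apply this to $Y=R$ and let $e$ be its common degree. The points of $R$ over $q_1$ are nodes of $C'$ joining $R$ to a component over $B_0$, that is, to $C_n$. The only such point is $n^1$, so $e=e_{\pi|_{R_1}}(n^1)$, and by condition (4) this equals $e_\pi(n^1)$. The same argument at $q_2$ gives $e=e_\pi(n^2)$, hence $e_\pi(n^1)=e_\pi(n^2)$.
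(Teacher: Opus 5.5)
Your proof is correct. For the existence of the cycle it uses the paper's key idea: Proposition \ref{prop:degreeofrestriction} forces $\pi^{-1}(B_0)=C_n$, so the chain $R$ over $n$ maps to a connected subcurve through $q_1,q_2$ that avoids $B_0$. The only difference there is that you build the cycle directly from a shortest path, whereas the paper argues by contradiction.

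For $e_\pi(n^1)=e_\pi(n^2)$ your route is genuinely different. The paper assumes $e_\pi(n^1)<e_\pi(n^2)$ and asserts that the chain then has a further point $m_1$ over $q_1$; this tacitly uses a balancing count. It then follows components from $m_1$ until a triple point appears. You instead isolate the balancing property itself, derive it from conditions (1) and (4), and apply it to $R$, whose only points over $q_1$ and $q_2$ you have shown to be $n^1$ and $n^2$. Your balancing claim never uses that $B_1$ is connected, which is good, since $\overline{B\setminus B_0}$ need not be.

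What your approach buys:
\begin{itemize}
\item a direct degree count in place of a contradiction;
\item an explicit statement of the balancing step the paper leaves implicit;
\item no component chasing. In the paper's version, the component $E_1'$ mapping onto $B_0$ can only be $C_n$ by Proposition \ref{prop:degreeofrestriction}, so the contradiction there is actually immediate.
\end{itemize}

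Two details in your cycle step need a line each before $(B_0,W_1,\dots,W_s)$ is a cycle in the paper's sense:
\begin{itemize}
\item $B_0$ must meet $W_1\cup\cdots\cup W_s$ only at $q_1$ and $q_2$. This follows from the observation you use at the end: a point of $R$ over a point of $B_0\cap\pi(R)$ is a node joining $R$ to $C_n$, hence equals $n^1$ or $n^2$.
\item Consecutive $W_i$ must meet in a single point. This holds because $\pi(R)$ does not contain $B_0$, so it has genus $0$ by Lemma \ref{lem:3lemas}(c), and its dual graph is a tree.
\end{itemize}
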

\begin{proof}
%Set $q_1:=\pi(n^1)$ and $q_2:=\pi(n^2)$ and 
Assume there is no  such cycle of $B$. 
Then every connected subcurve of $B$ containing $q_1:=\pi(n^1)$ and $q_2:= \pi(n^2)$ must contain $B_0:=\pi(C_n)$. 
Now, since $C'$ is stably equivalent to $C$, 
there is a rational chain $Y' \subset {C}'$ not containing $C_n$ and connecting $n^1$ to $n^2$. 
Hence, since $Y'$ is connected then so is $\pi(Y')$ and, since $\pi(Y')$ contains $q_1$ and $q_2$,
 then 
$\pi(Y')$ must contain $B_0$. 
Thus there is a component $L$ of $Y'$ such that $\pi(L)=B_0$.
Since $B_0=\pi(C_n)$, then the degree of the restriction
$\pi|_{C_n}$
must be less than
$d$, contradicting Proposition  \ref{prop:degreeofrestriction}. 

Now we show the last assertion.  
Let $Z=(B_0,E_1,\ldots,E_s)\subset B$ be the stated cycle, such that $q_1\in E_1$ and $q_2\in E_s$.
Set $Y=E_1\cup\cdots\cup E_s$, that is, $Y:=Z\setminus B_0$. 
Moreover,
set $\widetilde Y\subset Y'$ such that $C_n \cup\widetilde Y$ is a cycle in $C'$ through $n^1$ and $n^2$, 
say 
$C_n \cup\widetilde Y=(C_n,\widetilde E_1,\ldots,\widetilde E_r)$, with $n^1\in \widetilde E_1$ and $n^2\in \widetilde E_r$.
Note that $\pi(\widetilde Y)\subset Y$. 

Assume, without lack of generality, that $e_\pi(n^1)<e_\pi(n^2)$.
Then there exists a point $m_1\in \widetilde Y$ distinct from $n^1$ such that $\pi(m_1)=q_1$. 
Since $q_1$ is a node, then so is $m_1$ and there is a rational component $E_1'$ of $C'$ 
distinct from $\widetilde E_1$ containing $m_1$. 
Note that 
$\pi(E_1')=B_0$ and so $q_2\in \pi(E_1')$. Hence there exists $m_2\in E_1'$ such that $\pi(m_2)=q_2$. 
Since $q_2$ is a node, so is $m_2$ and there exists a rational component $E_2'$ of $C'$ 
distinct from $E_1'$ passing through $m_2$. 
Then $E_2'$ maps to the component of $B'$ through $q_2$ distinct from $B_0$.
Applying an argument similar to that used in the proof of 
Proposition \ref{prop:degreeofrestriction}, 
we obtain a chain in $C'$ not containing $E_1'$ and connecting $m_1$ and $m_2$. 
But this implies that $m_1$ is a triple point, a contradiction.
\end{proof}

Finally, we reach the central result of the section, which characterizes the $(d, h)$-ellipticity of stable irreducible curves in terms of that of its normalization and the configuration of its nodes.

\begin{theorem}\label{thm:main}
Let $C$ be an irreducible nodal curve and let $C_n$ be its normalization. 
Let $n_1,\ldots,n_\delta$ be the nodes of $C$ and 
let $n_1^i,n_2^i\in C_n$ be the branches of $n_i$, 
for each $i=1,\ldots,\delta$. Let $d,h\in\mathbb Z$ with $d\geq 2$ and $h\geq 1$.
%
%The curve C is $(d,h)$-elliptic if and only if $C_n$ is a smooth $(d,{h'})$-elliptic curve for some ${h'}\leq h$, with a finite map  $\pi:C_n\rightarrow B_0$ of degree $d$ and $g(B_0)={h}'$, satisfying the following conditions
%\begin{enumerate}[(a)]
%\item $e_{\pi}(n_i^1)=e_{\pi}(n_i^2),$
%for each $i=\delta_1,\ldots,\delta$; 
%\item there exist  integers 
%$$0\leq  \delta_0 < \delta_1 < \ldots < \delta_{s-1} < \delta_s = \delta$$
%and distinct points 
%$$q_{\delta_1}^1,q_{\delta_1}^2,\ldots, q_{\delta_s}^1,q_{\delta_s}^2\in B_0$$
%where $s=h-h'$, such that, up to reordering, we have
%	\begin{enumerate}
%	\item[(a.1)] $\pi(n_i^1)=\pi(n_i^2)$ for $i=1,\ldots,\delta_0$;
%	\item[(a.2)]  	$\pi^{-1}(q_{\delta_k}^1)= 
%	\{ n_{\delta_{k-1}+1}^1,\ldots, n_{\delta_k}^1 \}$
%	and 
%	$\pi^{-1}(q_{\delta_k}^2)=\{ n_{\delta_{k-1}+1}^2,\ldots, n_{\delta_k}^2 \}$
%for $k=1,\ldots, s$.
%	\end{enumerate}
%\end{enumerate}
%

The curve C is $(d,h)$-elliptic if and only if $C_n$ is a smooth $(d,{h'})$-elliptic curve for some ${h'}\leq h$, with a finite map  $\pi:C_n\rightarrow B_0$ of degree $d$ and $g(B_0)={h}'$, 
such that
there exist  integers 
$$0\leq  \delta_0 < \delta_1 < \ldots < \delta_{s-1} < \delta_s = \delta$$
and distinct points 
$$q_{\delta_1}^1,q_{\delta_1}^2,\ldots, q_{\delta_s}^1,q_{\delta_s}^2\in B_0$$
where $s=h-h'$ and, up to reordering, we have 
\begin{enumerate}[(a)]
\item $\pi(n_i^1)=\pi(n_i^2)$ for $i=1,\ldots,\delta_0$;
\item  	$\pi^{-1}(q_{\delta_k}^1)= 
\{ n_{\delta_{k-1}+1}^1,\ldots, n_{\delta_k}^1 \}$
and 
$\pi^{-1}(q_{\delta_k}^2)=\{ n_{\delta_{k-1}+1}^2,\ldots, n_{\delta_k}^2 \}$
for $k=1,\ldots, s$;
\item $e_{\pi}(n_i^1)=e_{\pi}(n_i^2),$
for each $i=\delta_0+1,\ldots,\delta$.
\end{enumerate}

\end{theorem}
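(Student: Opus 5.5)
We prove the two implications separately, relying throughout on Proposition \ref{prop:equivpseudo}, which lets us replace admissible covers by pseudo-admissible ones.

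\emph{Sufficiency.} Suppose $\pi:C_n\rightarrow B_0$ satisfies (a)--(c). First we deal with the nodes $n_1,\ldots,n_{\delta_0}$. Put $C^{(0)}$ for the partial normalization of $C$ at the nodes $n_{\delta_0+1},\ldots,n_\delta$; its normalization at the remaining nodes is $C_n$, which is connected. We apply Proposition \ref{prop:teo3.4b} once for each $n_i$ with $i\le\delta_0$, which is allowed because $\pi(n_i^1)=\pi(n_i^2)$ by (a). Before each application, Lemma \ref{lem:condition2} upgrades the map, and the new components glued over $B_0$ only ever lie above points of $B_0$. The result is a $d$-sheeted pseudo-admissible cover $C^{(0)\prime}\rightarrow B^{(0)}$ in which $C_n$ sits as a subcurve mapping by $\pi$, and $g(B^{(0)})=h'$; the genus is unchanged since we only attach rational trees to $B_0$. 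Next, for $k=1,\ldots,s$ we apply Proposition \ref{prop:construction} to the nodes $n_{\delta_{k-1}+1},\ldots,n_{\delta_k}$ with the points $q^1_{\delta_k}\neq q^2_{\delta_k}$. Its hypothesis (b) is our (b), and its hypothesis (a) is our (c). Each application raises the genus of the target by exactly one, so after $s=h-h'$ steps the target has genus $h$, and Proposition \ref{prop:equivpseudo} shows that $C$ is $(d,h)$-elliptic.

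There is one bookkeeping issue here. The preimages in Proposition \ref{prop:construction}(b) must be exactly the listed branch points, and the earlier gluings must not have created extra preimages of $q^j_{\delta_k}$. I will handle this by performing all the gluings at the very end: I apply the constructions to the cover $C_n\rightarrow B_0$ directly, attaching the rational components at the distinct points $q^j_{\delta_k}$ and at the points $\pi(n_i^1)$ independently, and only then invoke Lemma \ref{lem:condition2} a single time.

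\emph{Necessity.} Let $\pi:C'\rightarrow B$ be pseudo-admissible with $C'\steq C$ and $g(B)=h\ge1$, and set $B_0=\pi(C_n)$ and $h'=g(B_0)$. By Proposition \ref{prop:degreeofrestriction}, $\pi|_{C_n}:C_n\rightarrow B_0$ has degree $d$; in particular $\pi^{-1}(B_0)=C_n$ and $B_0$ is smooth, because $B$ has no internal nodes. We then sort the nodes:
\begin{itemize}
\item nodes with $\pi(n_i^1)=\pi(n_i^2)$ are listed first, which gives (a);
\item for every other node, Lemma \ref{lem:lemablock} provides a cycle through $B_0$ containing $\pi(n_i^1)$ and $\pi(n_i^2)$, together with $e_\pi(n_i^1)=e_\pi(n_i^2)$, which gives (c).
\end{itemize}
For (b) we study the structure of $B$. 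Lemma \ref{lem:3lemas}(c) shows that every component other than $B_0$ is rational, and that every subcurve of positive genus contains $B_0$. Hence $B\smallsetminus B_0$ splits into connected components $T_1,\ldots,T_m$, each a rational tree. Each $T_j$ meets $B_0$ in some number $a_j$ of points, and
\[
h=h'+\sum_j (a_j-1).
\]

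The step I expect to be the main obstacle is proving $a_j\le2$. The idea is to use $\pi^{-1}(B_0)=C_n$: every connected component of $\pi^{-1}(T_j)$ is a rational tree in $C'$, because a cycle inside it would form a positive-genus subcurve disjoint from $C_n$, contradicting Lemma \ref{lem:3lemas}(b). On the other hand, stable equivalence forces each such component to be one of the rational chains of $C'$ that replace a node of $C$, so it meets $C_n$ in at most two points. Since it surjects onto $T_j$, it meets $C_n$ above every point of $T_j\cap B_0$, so $a_j\le2$. The trees with $a_j=2$ are then exactly the $s$ handles, with $s=h-h'$; their two attaching points are the $q^1_{\delta_k}$ and $q^2_{\delta_k}$, and their fibres in $C_n$ consist exactly of the branches of the nodes whose chains lie over that tree. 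This gives (b), and the attaching points are all distinct since $B$ is nodal. The trees with $a_j=1$ account for the nodes in (a).
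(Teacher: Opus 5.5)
Your proof is correct. The sufficiency half is the paper's argument: the paper applies Proposition \ref{prop:construction} $s$ times and then Proposition \ref{prop:teo3.4b}, and you do the same in the other order. Your decision to perform all gluings on $C_n\rightarrow B_0$ first and call Lemma \ref{lem:condition2} only once is a sensible refinement. Each call of Proposition \ref{prop:construction} ends with Lemma \ref{lem:condition2}, which can turn the points over a later $q^j_{\delta_k}$ into nodes (for instance when $\pi$ is totally ramified there with $d\ge 3$). That would break hypothesis (b) for the next step, and the paper passes over this point.

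The necessity half follows a genuinely different route. The paper fixes a node with $\pi(n_i^1)\neq\pi(n_i^2)$ and uses a triple-point argument built on Lemma \ref{lem:lemablock} to show that every branch over $q^1$ is paired with a branch over $q^2$. It then gets $g(B)=g(B_0)+s$ by exhibiting one cycle through $B_0$ for each pair and arguing, rather briefly, that there are no other cycles.

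You instead split $\overline{B\smallsetminus B_0}$ into rational trees $T_j$ and read off $h=h'+\sum_j(a_j-1)$ from the dual graph. You then deduce both $a_j\le 2$ and the pairing in (b) from three facts about each connected component of $\pi^{-1}(T_j)$:
\begin{enumerate}
\item it surjects onto $T_j$, by conditions (1) and (4);
\item it meets $C_n$ above every point of $T_j\cap B_0$, since $\pi^{-1}(B_0)=C_n$;
\item it lies in a single connected component of the exceptional locus of $\tau$, so it meets $C_n$ either in one point or in exactly the two branches of a node.
\end{enumerate}
This makes the genus count fully explicit and yields (b) structurally, at the cost of a little dual-graph bookkeeping. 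The paper's version is shorter but leans harder on Lemma \ref{lem:lemablock}.

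Two minor corrections. First, invoking Lemma \ref{lem:3lemas}(b) to exclude cycles in $\pi^{-1}(T_j)$ needs $g(C_n)\ge 1$, which fails when $h'=0$ and $C_n$ is rational. You do not need it, because $C_n$ is the only component of $C'$ not contracted by $\tau$. Second, a component of $\pi^{-1}(T_j)$ need not be a chain replacing a node; it can be a rational tail. This is harmless, since it still meets $C_n$ in at most two points, and that is all you use.
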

\begin{proof}
Assume first that $C$ is an $(d,h)$-elliptic curve and let $\pi\:C'\rightarrow B$ be a pseudo-admissible cover of degree $d$, where $C'\steq C$ and $g(B)=h$. 
Then, by Proposition \ref{prop:degreeofrestriction},  the map $\pi|_{C_n}$ has degree $d$.

Let $\delta_0$ be the number of nodes $n_i$ such that $n_i^1$ and $n_i^2$ have the same image under $\pi$. 
If $\delta_0=\delta$, there is nothing to prove. 
Now assume $\delta_0<\delta$. 
Then there exists $i\in \{1, \ldots, \delta\}$ such that 
$\pi(n_i^1)\neq \pi(n_i^2)$. 
Set $q^1:= \pi(n_i^1) $ and $q^2:= \pi(n_i^2) $.

If, for some $i' \in \{1, \ldots, \delta\}$ we have $\pi(n_{{i}'}^1)= q^1$, then we must have $\pi(n_{{i}'}^2)= q^2$. 
Indeed, if $\pi(n_{{i}'}^2) \neq q^2$, then there exists $q \in B_0$, with $q\neq q^2$  such that $\pi(n_{{i}'}^2)= q$. 
Since $C'$ is stably equivalent to $C$  
there is a rational chain in $C'$ not containing $C_n$ connecting $n_i^1$ and $n_i^2$ and,
by Lemma \ref{lem:lemablock}, 
this chain maps
to a rational chain in $B$ not containing $B_0$ passing through $q^1$ and $q^2$.
Likewise, 
there is a rational chain in $C'$ not containing $C_n$ connecting $n_{i'}^1$ and $n_{i'}^2$ mapping 
to a rational chain in $B$ not containing $B_0$ passing through $q^1$ and $q$.
Since $q^1$ is also a point in $B_0$, then in this case $q^1$ would be a triple point of $B$ contradicting the fact that $B$ is nodal.

Let $t_1$ be the number of those $i\in\{1,\ldots,\delta\}$ such that $\pi(n_i^1)=q^1$ 
and set $\delta_1:=\delta_0+t_1$. 
Setting $q_{\delta_1}^1:=q^1$ and  $q_{\delta_1}^2:=q^2$, 
we showed that, reordering the nodes if necessary, 
$$\pi^{-1}(q^1_{\delta_1})= \{ n_{{i}}^1 \ | \ i=\delta_0+1, \ldots, \delta_1\}$$
and
$$\pi^{-1}(q^2_{\delta_1})= \{ n_{i}^2 \ | \  i=\delta_0+1, \ldots, \delta_1\}.$$

If $\delta_1=\delta$, we are done. If not, then there exists $i\in\{1,\ldots,\delta\}$ such that
$\pi(n_i^1)\neq \pi(n_i^2)$ and $\{\pi(n_i^1),\,\pi(n_i^2)\}\neq \{q_{\delta_1}^1,\,q_{\delta_1}^2\}$. In this case, proceeding as above, we obtain distinct points $q_{\delta_2}^1,q_{\delta_2}^2\in B_0$ whose pre-images under $\pi$ are of the form  above.
Proceeding in this way we obtain
$$0\leq \delta_0< \delta_1< \ldots < \delta_s = \delta$$
and distinct points $q_{\delta_1}^1,q_{\delta_1}^2,\ldots,q_{\delta_s}^1,q_{\delta_s}^2,\in B_0$
as stated, for some $s\in\mathbb N$. 
By Lemma \ref{lem:lemablock}, each pair $q_{\delta_j}^1$ and $q_{\delta_j}^2$ is associated to a cycle $Z_j$ of $B$ containing $B_0$ and having $q_{\delta_j}^1$ and $q_{\delta_j}^2$  as nodes. 
Hence
$g(B)\geq g(B_0)+s$.

Moreover, there are no other cycles in $B$. Indeed, by Lemma \ref{lem:3lemas}, any cycle $Z$ of $B$ must contain $B_0$. If $q^1,q^2\in B_0$ are the points in the intersection 
$B_0\cap (\overline{Z\setminus B_0})$, then the points in 
$(\pi|_{C_n})^{-1}(q^1)$ and $(\pi|_{C_n})^{-1}(q^1)$ 
are branches of nodes of $C$ which would imply that $q^1$ and $q^2$ would be $q^1_{\delta_j}$
and $q^2_{\delta_j}$ for some $j$. 
Therefore, we get the equality $g(B)= g(B_0)+s$. 
Finally, by Lemma \ref{lem:lemablock}, we have 
$e_{\pi}(n_i^1)=e_{\pi}(n_i^2)$
for every $i=\delta_0+1,\ldots,\delta$.

%\hrulefill 

Conversely, assume there exists a map $\pi$ as stated. Then applying Proposition \ref{prop:construction} successively $s$ times followed by Proposition \ref{prop:teo3.4b},
%Theorem 4.(b) of \cite{paper1},
 the result follows.
\end{proof}

As an example, one can classify the $(d,h)$-elliptic irreducible curves having only one node, for $h\geq 1$.

\begin{corollary}\label{cor:cor1} 
Let $C$ be an irreducible nodal curve with only one node $n$.
Let $C_n$ be the normalization of $C$ and let $n_1,n_2\in C_n$ be the branches of the nodes.
Let $d,h\in\mathbb Z$ with $d\geq 2$ and $h\geq 1$.
Then $C$ is $(d,h)$-elliptic if and only if $C_n$ satisfies one of the following conditions
\begin{enumerate}[(a)]
\item $C_n$ is $(d,h)$-elliptic with 
a finite map  $\pi:C_n\rightarrow B_0$ of degree $d$ and $g(B_0)={h}$, and $\pi(n_1)=\pi(n_2)$; or\
\item $C_n$ is $(d,h-1)$-elliptic with 
a finite map  $\pi:C_n\rightarrow B_0$ of degree $d$ and $g(B_0)={h-1}$, and $\pi(n_1)\neq\pi(n_2)$ and $\pi$ is totally ramified at  $n_1$ and $n_2$.
\end{enumerate}
\end{corollary}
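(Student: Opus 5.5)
The corollary is Theorem \ref{thm:main} specialized to $\delta=1$, so I would not redo any geometry; I would just unwind the combinatorial data $(h',\delta_0,\ldots,\delta_s)$ in this case.

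\textbf{Which values are possible.} With $\delta=1$, the chain $0\leq\delta_0<\delta_1<\cdots<\delta_s=\delta$ forces $s\leq 1$, so only two configurations occur.
\begin{enumerate}[(i)]
\item $s=0$ and $\delta_0=1$. Then $h'=h$ and condition (a) says $\pi(n_1)=\pi(n_2)$; conditions (b) and (c) are vacuous.
\item $s=1$ and $\delta_0=0$, $\delta_1=1$. Then $h'=h-1$ and condition (b) says $\pi^{-1}(q^1_{1})=\{n_1\}$ and $\pi^{-1}(q^2_{1})=\{n_2\}$ for distinct points $q^1_1,q^2_1\in B_0$.
\end{enumerate}

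\textbf{Rewriting (b) as total ramification.} Since $\pi\colon C_n\to B_0$ has degree $d$, a fiber consisting of a single point must have ramification index $d$ there. So in case (ii), condition (b) is equivalent to: $\pi(n_1)\neq\pi(n_2)$ and $\pi$ is totally ramified at $n_1$ and at $n_2$. Conversely, given such a $\pi$, setting $q^j_1:=\pi(n_j)$ recovers (b). Condition (c), namely $e_\pi(n_1)=e_\pi(n_2)$, then holds automatically since both indices equal $d$. This matches alternative (b) of the corollary, and case (i) matches alternative (a).

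\textbf{Both directions.} If $C$ is $(d,h)$-elliptic, Theorem \ref{thm:main} supplies $\pi$ together with data of type (i) or (ii), which gives alternative (a) or (b) respectively. Conversely, alternative (a) gives data of type (i), and alternative (b) gives data of type (ii) with $q^j_1=\pi(n_j)$. Theorem \ref{thm:main} then yields that $C$ is $(d,h)$-elliptic.

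There is no real obstacle; the argument is bookkeeping. The only point needing a moment's care is the equivalence between a singleton fiber and total ramification, which follows because the fiber of a degree-$d$ map counted with multiplicity has total degree $d$.
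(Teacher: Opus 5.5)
Your proposal is correct and follows the same route as the paper, whose proof simply says the corollary follows from Theorem \ref{thm:main}. You additionally write out the bookkeeping the paper leaves implicit: $\delta=1$ forces $s\le 1$, a singleton fiber of a degree-$d$ map is the same as total ramification, and so condition (c) holds automatically.
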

\begin{proof}
Follows from  Theorem \ref{thm:main}.
\end{proof}

\begin{corollary}\label{cor:cor2} 
Let $C$ be an irreducible nodal curve with only one node $n$ and let $C_n$ be it normalization.
If $C_n$ is hyperelliptic and has genus at least 2, then $C$ is either hyperelliptic or bielliptic, but not both.
\end{corollary}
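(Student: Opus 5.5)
The plan is to read ``hyperelliptic'' as $(2,0)$-elliptic and ``bielliptic'' as $(2,1)$-elliptic in the sense of Section \ref{sec:admissible}. I would first get an explicit gluing criterion for each property in terms of the pair $n_1,n_2\in C_n$ and the hyperelliptic involution $\iota$ of $C_n$. Then I would show that exactly one criterion holds. Since $g(C_n)\geq 2$, the hyperelliptic map $\varphi\colon C_n\to\mathbb P^1$ is the unique degree-$2$ map to a rational curve, and any degree-$2$ map $\psi$ to an elliptic curve is the quotient by an involution $j\neq\iota$ commuting with $\iota$.

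\emph{Step 1 (hyperelliptic criterion).} Let $\pi\colon C'\to B$ be a $2$-sheeted pseudo-admissible cover with $C'\steq C$ and $g(B)=0$. I will show that $\pi|_{C_n}$ has degree $2$. If it had degree $1$, then $C_n$ would be rational, contradicting $g(C_n)\geq 2$. Hence $\pi|_{C_n}=\varphi$. The rational chain $Y'\subset C'$ joining $n_1$ to $n_2$ maps into the tree $B$ and cannot pass through $B_0=\pi(C_n)$ again, since all the degree is already used on $C_n$. So $\pi(Y')$ is a connected subcurve of a tree meeting $B_0$ at $\varphi(n_1)$ and at $\varphi(n_2)$. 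This forces $\varphi(n_1)=\varphi(n_2)$, i.e.\ $n_2=\iota(n_1)$ with $n_1$ not a Weierstrass point. Conversely, if $n_2=\iota(n_1)$, Proposition \ref{prop:teo3.4b} gives the cover. Thus $C$ is hyperelliptic iff $n_2=\iota(n_1)$.

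\emph{Step 2 (bielliptic criterion).} I apply Corollary \ref{cor:cor1} with $d=2$, $h=1$. Case (b) there says that $\varphi(n_1)\neq\varphi(n_2)$ and $\varphi$ is totally ramified at both points, i.e.\ $n_1,n_2$ are distinct Weierstrass points. Case (a) asks for a bielliptic involution $j$ with $j(n_1)=n_2$. So $C$ is bielliptic iff $n_1,n_2$ are distinct Weierstrass points, or $n_2=j(n_1)$ for some bielliptic involution $j$ of $C_n$.

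\emph{Step 3 (``not both'').} Case (b) of Step 2 is incompatible with Step 1, because Weierstrass points are fixed by $\iota$, so $\iota(n_1)=n_1\neq n_2$. For case (a), suppose $n_2=\iota(n_1)=j(n_1)$. Then $n_1$ is fixed by $\iota j$. I would use that $\iota j$ is also an involution and that $\varphi$ is the quotient by $\iota$, so the two gluing constructions of Proposition \ref{prop:teo3.4b} would both produce admissible covers. I would then try to derive a contradiction with Lemma \ref{lem:lemablock} and Proposition \ref{prop:degreeofrestriction} applied to a common stable model.

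\emph{Step 4 (``either/or'') and main obstacle.} The hard part is exhaustiveness: showing that every pair $n_1,n_2$ falls under one of the criteria. For a general pair of points on $C_n$, none of the conditions in Steps 1--2 is visibly forced. My first attempt would be to use the stability and irreducibility of $C$ to restrict the pair $(n_1,n_2)$. If that fails, the statement would need to be read as the dichotomy between the two mutually exclusive gluing conditions of Steps 1 and 2(b). In either case, Step 3 for involution-type overlaps is the step I expect to be the obstacle.
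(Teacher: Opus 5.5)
\textbf{Verdict.} Your Steps 1, 2 and 3(b) are correct. Step 1 is actually needed, since Theorem \ref{thm:main} assumes $h\geq 1$ and does not cover the hyperelliptic case. Steps 3(a) and 4, however, cannot be completed, because the corollary read literally is false at exactly the two points you flagged.

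\textbf{What the paper's proof shows.} The paper's proof is the single line ``follows from Theorem \ref{thm:main}, since the hyperelliptic map for $C_n$ is unique''. It applies Theorem \ref{thm:main} only with $\pi=\varphi$, so $h'=0$, and then $\delta=1$ forces $s\leq 1$. That argument proves precisely your fallback reading: for covers restricting to $\varphi$ on $C_n$ one has $h\in\{0,1\}$, with $\varphi(n_1)=\varphi(n_2)$ in one case and $n_1,n_2$ distinct Weierstrass points in the other, and these exclude each other. It ignores case (a) of Corollary \ref{cor:cor1}, and it ignores the possibility that neither case occurs.

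\textbf{Step 4 (exhaustiveness) is false.} Gluing any two distinct points of a curve of genus $\geq 2$ gives a stable irreducible curve, so stability and irreducibility impose nothing on $(n_1,n_2)$. By your own Steps 1--2, once a non-Weierstrass $n_1$ is fixed, only finitely many $n_2$ make $C$ hyperelliptic or bielliptic: $\iota(n_1)$, and $j(n_1)$ for the finitely many bielliptic involutions $j$ of $C_n$. For $g(C_n)=2$ this is independent of the paper: the curves in question fill the $5$-dimensional boundary divisor $\Delta_0\subset\overline{\mathcal M}_3$. The hyperelliptic and bielliptic loci of $\overline{\mathcal M}_3$ have dimensions $5$ and $4$ and are not contained in $\Delta_0$, so they meet it in dimension at most $4$.

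\textbf{Step 3(a) (``not both'') is false when $g(C_n)=2$.} Your observation that $n_1$ must be fixed by $\iota j$ is the right computation, but it gives no contradiction. Since $C_n/\langle\iota,j\rangle$ is rational, $g(C_n/\iota j)=g(C_n)-1$. Riemann--Hurwitz then gives $6-2g(C_n)$ fixed points for $\iota j$.
\begin{itemize}
\item For $g(C_n)=3$ there are none.
\item For $g(C_n)\geq 4$ there is no bielliptic $j$ at all, by Castelnuovo--Severi.
\item For $g(C_n)=2$ there are two fixed points, interchanged by $\iota$.
\end{itemize}
So ``not both'' holds exactly when $g(C_n)\geq 3$.

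\textbf{Explicit counterexample.} Take $C_n\colon y^2=(x^2-a)(x^2-b)(x^2-c)$ with $a,b,c$ distinct and nonzero. Let $j(x,y)=(-x,-y)$; its only fixed points are the two points at infinity, so $C_n/j$ is elliptic. Put $n_1=(0,\sqrt{-abc})$ and $n_2=(0,-\sqrt{-abc})$. Then $n_2=\iota(n_1)=j(n_1)$. Hence the glued curve $y^2=x^2(x^2-a)(x^2-b)(x^2-c)$ is hyperelliptic by Proposition \ref{prop:teo3.4b}, and bielliptic by Corollary \ref{cor:cor1}(a). Indeed it is the limit as $\epsilon\to 0$ of the smooth genus-$3$ curves $y^2=(x^2-\epsilon)(x^2-a)(x^2-b)(x^2-c)$, each of which is hyperelliptic and also bielliptic via $(x,y)\mapsto(-x,y)$.

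\textbf{What to do instead.} Do not look for proofs of Steps 3(a) and 4. What is true is your Step 1--2 description of when $C$ is hyperelliptic and when it is bielliptic, together with ``not both'' when $g(C_n)\geq 3$.
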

\begin{proof}
Follows Theorem \ref{thm:main}, since the hyperelliptic map  for $C_n$ is unique.
\end{proof}

\printbibliography

\end{document}